\documentclass[12pt]{article}

\hoffset = -75pt

\voffset = -70pt

\textheight = 670pt

\textwidth = 540pt

\usepackage{amsmath}
\usepackage{amsfonts}
\usepackage{verbatim}
\usepackage{amsthm}
\usepackage{amssymb}
\usepackage{epsfig}
\usepackage{scrextend}
\usepackage{float}

\newtheorem{theorem}{Theorem}[section]
\newtheorem{proposition}[theorem]{Proposition}
\newtheorem{lemma}[theorem]{Lemma}

\theoremstyle{definition}



\newcommand{\mc}[0]{\mathcal}

\newcommand{\dual}[0]{^*}

\newcommand{\bb}[0]{\mathbb}
\newcommand{\bs}[0]{\backslash}

\begin{document}

\title{The family of bicircular matroids closed under duality}

\author{Vaidy Sivaraman\footnote{Department of Mathematics and Statistics, Mississippi State University, Mississippi State, MS 39762, USA. \underline{Email} vaidysivaraman@gmail.com}\, and Daniel Slilaty\footnote{Department of Mathematics and Statistics, Wright State University, Dayton, OH 45435, USA. \underline{Email} daniel.slilaty@wright.edu.}}

\maketitle

\abstract{We characterize the 3-connected members of the intersection of the class of bicircular and cobicircular matroids. Aside from some exceptional matroids with rank and corank at most 5, this class consists of just the free swirls and their minors.}

\section{Introduction}

Whitney showed that the intersection of the classes of graphic and cographic matroids is exactly the class of planar graphic matroids. Slilaty showed \cite{Slilaty:Cographic} that the intersection of the classes of connected cographic matroids and connected signed-graphic matroids is exactly the class of connected cographic matroids of projective-planar graphs. Carmesin \cite{carmesin:Whitney} greatly extends these ideas by defining a class $r$-locally planar graphs and a class of $r$-local matroids which describes the intersection of this class of matroids with the class of cographic matroids. Carmesin also extends these ideas to 2-complexes embedded in 3-space \cite{carmesin:III,carmesin:IV,carmesin:I,carmesin:II,carmesin:V}. The intersections investigated in all these works are described in terms of topological embeddings.

In this paper, we determine the intersection of the classes of 3-connected bicircular matroids and cobicircular matroids and find that it, unsurprisingly, is not described in terms of topological embeddings. Aside from some exceptional matroids with rank and corank at most 5, this intersection consists solely of the free swirls and their minors. The free swirl is the bicircular matroid $B(2C_n)$ in which $2C_n$ is the graph obtained from the cycle of length $n$ by doubling each edge.

The class of bicircular matroids is, of course, a minor-closed class of matroids that is properly contained within the class of transversal matroids. Transversal matroids are again not closed under duality but also not closed under taking minors, either. In order to describe the minors and duals of transversal matroids, the more general class of gammoids is used. Other investigations of natural subclasses of transversal matroids have also found closure under both minors and duality to be a property of interest. Bonin and de Mier \cite{BoninDeMier:LatticePathEnumerative, BoninDeMier:LatticePathStructure} investigated the class of lattice-path matroids and found that it is both minor closed and duality closed. Bonin and Gim\'{e}nez \cite{BoninGimenez:MultiPath} investigate the class of multi-path matroids which sits properly between the classes of lattice-path and transversal matroids and yet is still closed under both minors and duality. Las Vergnas \cite{LasVergnas} showed that the class of fundamental transversal matroids is closed under duality but not closed under minors. Brualdi \cite{Brualdi} shows that Las Vergnas' result is a corollary of a more general phenomenon. Neudauer \cite{Neudauer} characterizes the intersection of the classes of bicircular matroids and fundamental transversal matroids. Her result is thus related to ours. The intersection of the class of bicircular matroids with lattice-path matroids and multi-path matroids may be an interesting topic of investigation.

In the remainder of this introduction, we describe our main result which is Theorem \ref{T:MainResult}. Theorem \ref{T:MainResult} is actually a corollary of a stronger statement (Theorem \ref{T:StrongerThenMain}) which is stated and proven in the final section of the paper. Theorem \ref{T:StrongerThenMain} also contains information about excluded minors. Given a matroid $M$ we say that $M$ is \emph{bicircular} when $M=B(G)$ for some graph $G$, $M$ is \emph{cobicircular} when $M\dual=B(G)$ for some graph $G$, and $M$ is doubly bicircular when it is both bicircular and cobicircular. Consider the following six families of 3-connected doubly bicircular matroids, only one of which is infinite. The first is the collection of free swirls and their minors. Again, the free swirl of rank $n$ is identically self dual and is the bicircular matroid $B(2C_n)$ where $2C_n$ is the graph obtained from the cycle of length $n$ with each edge doubled.

\begin{figure}[H]
\begin{center}
\includegraphics[page=5,scale=0.6]{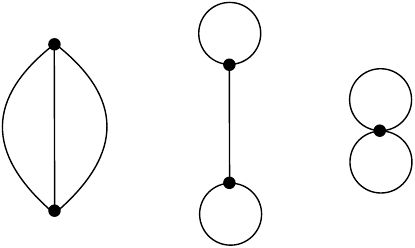}
\end{center}
\caption{Some graphs for doubly bicircular matroids.} \label{F:SporadicFamilies}
\end{figure}

\noindent The remaining five families of 3-connected doubly bicircular matroids are all finite and have rank and corank at most 5. They are the bicircular matroids of the graphs of Figure \ref{F:SporadicFamilies} and their minors. The reader can check that $B(K_4^{++})$, $B(N_8)$, $B(O_8)$, and $B(F_{10})$ are all self dual and that $B\dual(Z_8)\cong B(Z_8\dual)$. This check can be done by hand or by using the SageMath software package. One way to represent the bicircular matroid of a graph $G$ in SageMath is as follows. Define a $\bb Q$-matrix $A$ whose rows are indexed by $V(G)$ and whose columns are indexed by $E(G)$. The column corresponding to a link $e$ having endpoints in rows $i$ and $j$ should have a $-1$ in row $i$, a prime number $p_e$ unique to $e$ in row $j$, and zeros in all other rows. The column corresponding to a loop incident to vertex $v$ should be the elementary column vector corresponding to the row for $v$. Now $M(A)=B(G)$.

\begin{theorem}[Main Result] \label{T:MainResult}
If $M$ is a 3-connected matroid, then $M$ is doubly bicircular if and only if $M$ is a minor of a free swirl or $M$ is a minor of the bicircular matroid of one of the graphs in Figure \ref{F:SporadicFamilies}.
\end{theorem}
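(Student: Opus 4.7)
The plan is to separate the two directions of Theorem \ref{T:MainResult}: the \emph{if} direction is a finite verification, while the \emph{only if} direction requires a structural classification.

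For the \emph{if} direction, I would check that each graph in the list (the family $2C_n$ together with the five graphs of Figure \ref{F:SporadicFamilies}) yields a 3-connected bicircular matroid whose dual is again bicircular. Self-duality of $B(2C_n)$ follows from the obvious involution exchanging the two parallel copies of each cycle edge. For the five sporadic graphs one can use the $\bb Q$-matrix recipe already given in the introduction to verify, either by hand or in SageMath, that $B(K_4^{++})$, $B(N_8)$, $B(O_8)$, $B(F_{10})$ are self-dual and that $B(Z_8)\dual\cong B(Z_8\dual)$. Closure under 3-connected minors is then automatic, since the class of bicircular matroids is minor-closed and duality commutes with deletion and contraction.

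The \emph{only if} direction is the heart of the theorem. Suppose $M$ is 3-connected with $M=B(G)=B(H)\dual$. The first step is to put $G$ (and $H$) into a standard form using the known representation theory of 3-connected bicircular matroids, reducing to the case that $G$ is 2-connected with controlled loops and no redundant components. The next step is to translate the cobicircularity of $B(G)$ into a combinatorial condition on $G$: every cocircuit of $B(G)$, computable from the vertex coboundaries and bonds of $G$, must appear in $H$ as a bicycle, that is, a theta subgraph, a handcuff, or a figure-eight. Dualizing, the same condition holds with $G$ and $H$ reversed, yielding a tightly constrained correspondence between subgraphs of the two graphs.

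The main technical obstacle is to show that, once the rank and corank both exceed $5$, these constraints force $G=2C_n$. My approach would be to accumulate a list of small forbidden graph configurations, namely local features of $G$ whose bicircular matroid fails to have a bicircular dual, and to argue that avoiding all of them drives $G$ into the doubled-cycle form. Natural candidates to examine are branch vertices of degree at least three that do not lie on a digon, long paths between branch vertices, and theta-minors that resist dualization inside the bicircular world. Each such configuration should yield an excluded minor that must be ruled out in any doubly bicircular $M$, feeding the stronger excluded-minor statement announced as Theorem \ref{T:StrongerThenMain}. For rank or corank at most $5$, a finite enumeration, mechanizable via the SageMath representation described in the introduction, should recover precisely the five sporadic families of Figure \ref{F:SporadicFamilies}.
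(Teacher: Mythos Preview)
Your \emph{if} direction matches the paper: the sporadic verifications and self-duality of $B(2C_n)$ are exactly what is done in the introduction, and minor-closure then finishes it.

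The \emph{only if} direction, however, is only a plan, and the plan diverges from what actually works. Two concrete gaps. First, the cocircuit-correspondence idea---matching cocircuits of $B(G)$ with bicycle subgraphs of $H$---is not developed, and the paper does not use it; trying to run a two-sided argument in $G$ and $H$ simultaneously tends to founder because the bicircular representation of the dual is not canonical and the correspondence is hard to control globally. Second, your suggested forbidden configurations (high-degree branch vertices off digons, long paths, generic theta-minors) are not the right obstructions. The actual excluded minors are nine specific small matroids---$U_{2,7}$, $U_{2,6}'$, $U_{2,5}''$, $M(K_{2,3})$, $B(T_{2,2,3})$, $B(T_{2,2,2}')$, $B(W_4)$, $B(K_4^\ell)$, $B(D_4^{\ell\ell})$---and each one requires a separate ad hoc argument exploiting the rigidity of its bicircular representations (Section~\ref{S:ExcludedMinors}).

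The structural half of the paper's argument is also different in kind from your rank/corank~$\le 5$ enumeration. The paper does not split on rank; instead it takes the underlying graph $\hat G$ (loops removed), applies the canonical tree decomposition into 3-connected pieces, cycles, and multilinks, and then runs a case analysis on the shape of that tree. Proposition~\ref{P:3Connected} (via Tutte's Wheel Theorem) pins down the 3-connected case to minors of $K_4^{++}$, and the remaining cases are organized by the degree of the longest cycle term in the tree and the sizes of its neighboring multilink terms. The sporadic families emerge organically from this tree analysis rather than from a bounded-rank search. Without the tree decomposition your plan has no mechanism to reduce an arbitrary 2-connected $G$ to a finite list of local pictures, which is where the real work lies.
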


\section{Preliminaries}

We assume that the reader is familiar with matroid theory as in Oxley's book \cite{Oxley:2ndEdition}. We will, however, briefly review the definition of a bicircular matroid and modify it slightly so that the class of bicircular matroids is closed under taking minors.

A graph $G$ consists of a vertex set $V(G)$ and an edge set $E(G)$ in which an edge $e\in E(G)$ is either a \emph{link} connecting two distinct vertices, a \emph{loop} on a single vertex, or a \emph{free edge} which is not incident to any vertex.
Given a graph $G$, the bicircular matroid $B(G)$ has element set $E(G)$ in which the free edges are matroid loops in $B(G)$ and every other circuit of $B(G)$ is the edge set of a subgraph of $G$ which is a subdivision of one of the graphs shown in Figure \ref{F:BicircularCircuits}. For a matroid $M$, when $M=B(G)$ we say that $G$ is a \emph{bicircular representation} of $M$.

\begin{figure}[H]
\begin{center}
\includegraphics[page=1,scale=1]{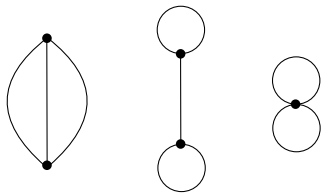}
\end{center}
\caption{Graphs describing the circuits of $B(G)$.} \label{F:BicircularCircuits}
\end{figure}

For any edge $e$, $B(G)\bs e=B(G\bs e)$. If $e$ is a free edge, then $B(G)/e=B(G)\bs e$. If $e$ is a link, then $B(G)/e=B(G/e)$. If $e$ is a loop which is incident to vertex $v$, then $B(G)/e=B(\tilde G)$ in which $\tilde G$ is defined a follows: $V(\tilde G)=V(G)-v$, $E(\tilde G)=E(G)-e$, if $x\neq e$ is a loop in $G$ that is incident to $v$ then $x$ becomes a free edge in $\tilde G$, if $x$ is a link in $G$ that is incident to $v$ then $x$ is a loop in $\tilde G$ which will be incident to its second endpoint from $G$, and all other $e\in E(G)$ remain as they are in $G$.

The usual notion of a graph $H$ being a minor of a graph $G$ does not include the contraction operation for loops in the previous paragraph. Rather loop contractions in graphs coincide with loop deletions. In particular, if graph $H$ is a minor of a graph $G$, then $B(H)$ is a minor of $B(G)$; however, if matroid $B(H)$ is a minor of matroid $B(G)$, then it need not be the case that $H$ is a minor of $G$ in the graphic sense.

We use Proposition \ref{P:Wagner3Biconnectivity} without further mention.

\begin{proposition}[Wagner {\cite[Prop.2]{Wagner:ConnectivityBicircular}}] \label{P:Wagner3Biconnectivity}
If $G$ is connected, has no free edges and has at least three vertices, then $B(G)$ is 3-connected if and only if $G$ is 2-connected, has no degree-2 vertices, and has no two loops incident to the same vertex.
\end{proposition}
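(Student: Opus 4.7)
The plan is to use the rank formula for bicircular matroids. A set $F\subseteq E(G)$ is independent in $B(G)$ precisely when each connected component of the subgraph $(V(F),F)$ contains at most one cycle, from which
\[
r(F) = |V(F)| - t(F),
\]
where $V(F)$ is the set of vertices incident to some edge of $F$ and $t(F)$ is the number of tree (acyclic) components of $(V(F),F)$.

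For the forward direction I would prove the contrapositive by producing a small separation whenever one of the three conditions on $G$ fails. If two loops $e_1,e_2$ are incident to a common vertex, they form a $2$-element circuit of $B(G)$ (a tight handcuff with trivial connecting path), so $(\{e_1,e_2\},\,E\setminus\{e_1,e_2\})$ is a $2$-separation. If $v$ has degree $2$ with incident links $e_1,e_2$, then $\{e_1,e_2\}$ is a $2$-element cocircuit, because every vertex of every bicircular circuit has degree at least $2$ and so any circuit meeting $\{e_1,e_2\}$ must contain both edges. Finally, if $G$ has a cut vertex, write $G=G_1\cup G_2$ with $|V(G_1)\cap V(G_2)|\leq 1$ and both $G_i$ edge-bearing; the rank formula yields $r(E(G_1))+r(E(G_2))-r(E(G))\leq 1$, producing a $1$- or $2$-separation.

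For the converse, assume $G$ is $2$-connected (so $G$ contains a cycle and $t(E(G))=0$), has no degree-$2$ vertex, and no two loops at a common vertex. Suppose $(A,B)$ is a $2$-separation of $B(G)$ and let $S=V(A)\cap V(B)$. Since $V(A)\cup V(B)=V(G)$ and $t(E(G))=0$, the inequality $r(A)+r(B)-r(E(G))\leq 1$ simplifies to
\[
|S|\leq 1+t(A)+t(B).
\]
When $|S|\leq 1$: if $V(A)\setminus S$ and $V(B)\setminus S$ are both nonempty, then $S$ is a cut of $G$, impossible by $2$-connectedness; otherwise one side, say $A$, has $V(A)\subseteq S$, so $A$ consists of loops at a single vertex and $|A|\geq 2$ gives two such loops, contradicting the hypothesis. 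When $|S|=2$, one of $t(A),t(B)$ must be positive, and the crucial step is to translate the existence of an acyclic component of $(V(A),A)$, attached to the rest of $G$ only through $S$, into either a vertex of $G$ of degree at most $2$ or a vertex of $G$ carrying two loops.

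The main obstacle is this last case, since the rank formula mixes the vertex-overlap term $|S|$ with the tree-component terms $t(A), t(B)$; one has to systematically show that every way an acyclic piece of $(V(A),A)$ can attach to $G$ through a $2$-vertex boundary forces either a low-degree vertex or a pair of parallel loops, using all three hypotheses on $G$ to close off every case the inequality permits.
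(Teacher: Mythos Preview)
The paper does not prove this proposition; it is quoted from Wagner \cite{Wagner:ConnectivityBicircular} and used as a black box throughout. Your proposal therefore goes beyond what the paper offers, and the rank-formula approach you take is the natural one and can be made to work.

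Your forward direction is essentially complete. Two loops at a common vertex form a $2$-circuit; a degree-$2$ vertex (necessarily incident to two links, since $G$ is connected on at least three vertices) yields a series pair; and a cut vertex gives a small separation via the rank identity, once one notes that $G$ cannot be a tree here, so $t(E(G))=0$.

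The converse, however, has a genuine gap that you yourself flag. First, your case division stops at $|S|=2$, but nothing you have derived bounds $|S|$ by $2$: the inequality $|S|\le 1+t(A)+t(B)$ permits $|S|$ to be arbitrarily large provided there are enough tree components. The missing ingredient is a degree count inside each tree component $T$ of $(V(A),A)$: every vertex of $T$ outside $S$ carries its full $G$-degree (at least $3$) inside $T$, and summing degrees in the tree $T$ gives $|V(T)\cap S|\ge (|V(T)|+2)/2\ge 2$; since distinct components are vertex-disjoint, this yields $|S|\ge 2t(A)$ and symmetrically $|S|\ge 2t(B)$. Even with these extra inequalities, several configurations $(t(A),t(B),|S|)$ survive beyond the single one you discuss, and in each of them one must still examine degrees at the vertices of $S$ and the structure of the non-tree pieces on both sides to extract a contradiction from the three hypotheses on $G$. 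You describe this remaining work as something ``one has to systematically show'' but do not carry it out; as written, the converse is a plan rather than a proof.
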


Given an integer $n\geq 2$, an $n$-\emph{multilink} is the graph consisting of two vertices along with $n$ links connecting them. The $n$-multilink is denoted by $nK_2$. A graph is \emph{separable} if there exists subgraphs $G_1$ and $G_2$ for which $G=G_1\cup G_2$ but $G_1\cap G_2$ is either empty or a single vertex. Thus a graph on at least three vertices is non-separable if and only if it is 2-connected and loopless. In this paper we will make use of the \emph{canonical tree decomposition} of a non-separable graph $G$. For information
on the canonical tree decomposition see one of \cite{CunninghamEdmonds:Decompositions},
\cite[pp.308--315]{Oxley:2ndEdition}, or \cite{Tutte:ConnectivityInGraphs} for a full description. In short, if $G$ is
non-separable, then there is a unique labeled tree $T$ satisfying the following.\begin{itemize}
\item Each vertex $v$ in $T$ is labeled with either a 3-connected simple graph, a cycle of length at least three, or
$mK_2$ for some $m\geq 3$.
\item No two cycle-labeled vertices are adjacent in $T$ and no two multilink-labeled vertices are adjacent in $T$.
\item If $e$ is an edge of $T$ whose endpoints are labeled with graphs $G_1$ and $G_2$, then $e$ corresponds to an
    edge $e_i$ in $G_i$.
\item $G$ is obtained by executing the 2-sums indicated by the vertex labels of $T$ along the edges
    indicated by the edges of $T$.\end{itemize} An important consequence of this tree decomposition is that if $T_0$ is
    a subtree of $T$, then the graph $G_0$ obtained by executing the 2-sums indicated in $T_0$ is a minor of $G$.

\section{Excluded Minors} \label{S:ExcludedMinors}

In this section we show that the bicircular matroids Theorem \ref{T:StrongerThenMain} Part (2) are minor-minimal matroids that are not cobicircular matroids.

\begin{figure}[H]
\begin{center}
\includegraphics[page=8,scale=0.6]{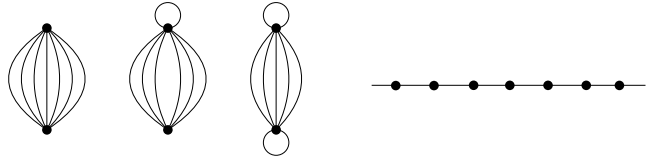}
\end{center}
\caption{The matroid $U_{2,7}$ and its bicircular representations.} \label{F:L7}
\end{figure}

\begin{proposition} \label{P:L7}
The matroid $U_{2,7}$ is bicircular but minimally not cobicircular. Figure \ref{F:L7} shows all possible bicircular representations of $U_{2,7}$.
\end{proposition}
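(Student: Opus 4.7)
The plan is to establish three things in turn: that $U_{2,7}$ is bicircular, that every proper minor of $U_{2,7}$ is cobicircular, and that $U_{2,7}$ itself is not cobicircular. The enumeration of the bicircular representations will fall out of the first step. I would begin from the rank formula $r(B(G))=|V(G)|-t(G)$ (where $t(G)$ counts tree components, isolated vertices included) together with two simple matroid-theoretic exclusions: since $U_{2,7}$ has no coloops, $G$ can have no tree component with an edge, and since every 2-subset of $U_{2,7}$ is independent, two loops at a common vertex are forbidden (they would form a figure-8 of size 2). A direct-sum decomposition of $B(G)$ coming from two disjoint cyclic components would yield $U_{1,l_1}\oplus U_{1,l_2}$, which has $l_1 l_2 \neq \binom{7}{2}=21$ bases. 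After deleting isolated vertices, these exclusions force $G$ to be a single 2-vertex cyclic component with 7 edges, of which at most one is a loop at each endpoint. Writing $(a,b,c)$ for (links, loops at $u$, loops at $v$), the triples with $b,c\in\{0,1\}$, $a\geq 1$, $a+b+c=7$ give exactly the three graphs $(7,0,0)$, $(6,1,0)$, $(5,1,1)$ up to isomorphism, which are precisely the graphs of Figure~\ref{F:L7}. A routine check that every 3-subset of edges in each case is a theta, figure-8, or barbell then confirms $B(G)=U_{2,7}$.

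By the symmetry of $U_{2,7}$, it suffices for minimality to verify that $U_{2,7}\setminus e = U_{2,6}$ and $U_{2,7}/e = U_{1,6}$ are cobicircular---equivalently, that $U_{4,6}$ and $U_{5,6}$ are bicircular. For the first I would use $B(K_4)=U_{4,6}$: the two-edge complement of any 4-subset of $E(K_4)$ is either a perfect matching (making the 4-subset a 4-cycle) or a pair of adjacent edges (making the 4-subset a triangle plus a pendant edge), and both are pseudoforests, while $K_4\setminus f$ contains a theta on the two endpoints of $f$. For the second I would take $\Theta$ to be the theta graph with three internally disjoint $uv$-paths of lengths $2,2,2$; then $\Theta$ itself is a circuit and, by symmetry, deleting any single edge of $\Theta$ leaves a pseudoforest, giving $B(\Theta) = U_{5,6}$.

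The main step---and the main obstacle---is showing $U_{5,7}$ is not bicircular. Suppose $B(G) = U_{5,7}$. The same structural exclusions as in the first paragraph (no coloops, matroid-connectedness) reduce to $G$ being connected on exactly 5 vertices with 7 edges. For every edge $e$, the matroid $B(G\setminus e) = U_{5,7}\setminus e = U_{5,6}$ has its full 6-element ground set as its unique circuit, so the 6 edges of $G\setminus e$ form a single bicircular circuit---a theta, figure-8, or barbell subdivision---that must span all 5 vertices. The crucial point is that in any such subdivision every vertex has degree at least 2. Thus if some vertex $v$ of $G$ had degree 2, then removing an edge incident to $v$ (link or loop) would leave $v$ with degree at most 1 in $G\setminus e$, a contradiction. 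Hence every vertex of $G$ has degree $\geq 3$, and the handshake lemma forces $14 = 2|E(G)| \geq 3|V(G)| = 15$, which is absurd. The delicate part is identifying the right minimum-degree condition in bicircular circuits together with the structural reduction to a single connected 5-vertex graph; once those are in hand, the handshake lemma finishes everything in a single line.
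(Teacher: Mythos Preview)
Your proof is correct and complete. The approach, however, differs substantially from the paper's.

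The paper disposes of the entire ``minimally not cobicircular'' assertion in one line by invoking Matthews' Theorem~3.8 from \cite{Matthews:Bicircular}, which characterizes exactly which uniform matroids are bicircular; from that characterization one reads off immediately that $U_{5,7}$ is not bicircular while $U_{4,6}$ and $U_{5,6}$ are. You instead give a fully self-contained argument: you exhibit $K_4$ and the theta graph $K_{2,3}$ as explicit bicircular representations of $U_{4,6}$ and $U_{5,6}$ to handle minimality, and for the non-cobicircularity of $U_{2,7}$ you argue directly that any bicircular representation $G$ of $U_{5,7}$ would be a connected $5$-vertex, $7$-edge graph in which, for every edge $e$, the graph $G\setminus e$ is a spanning bicircular circuit and hence has minimum degree $2$, forcing $\delta(G)\geq 3$ and contradicting the handshake lemma. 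Your route avoids any external citation and is entirely elementary; the paper's route is much shorter but relies on a nontrivial result proved elsewhere. Your enumeration of the three two-vertex representations of $U_{2,7}$ matches the paper's (which simply declares this ``evident''), and your exclusion arguments (no coloops, no parallel loops, connectedness via a basis count or matroid connectivity) are all sound.
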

\begin{proof}
It is evident that the graphs in Figure \ref{F:L7} are all possible bicircular representations $U_{2,7}$.  Matthews' Theorem 3.8 from \cite{Matthews:Bicircular} implies that  $U_{2,7}$ is minimally not cobicircular.
\end{proof}

Given an element $e$ in a matroid $M$ with a transitive symmetry group, let $M'$ be the matroid obtained from $M$ by replacing $e$ with a pair of coparallel elements. When we write $M''$ we mean that $M$ has a 2-transitive symmetry group and two distinct elements of $M$ are replaced by coparallel pairs.

\begin{figure}[H]
\begin{center}
\includegraphics[page=9,scale=0.5]{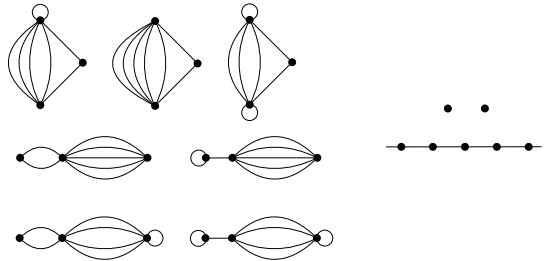}
\end{center}
\caption{The matroid $U_{2,6}'$ along with its bicircular representations.} \label{F:L6'}
\end{figure}

\begin{figure}[H]
\begin{center}
\includegraphics[page=16,scale=0.5]{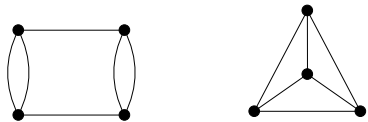}
\end{center}
\caption{The graphs $C_4^{++}$ and $K_4$ are the only bicircular representations of $U_{4,6}$.} \label{F:U46}
\end{figure}

\begin{proposition} \label{P:ParallelCoparallelElements}
Let $B$ be a doubly bicircular matroid and $e\in E(B)$. If for every bicircular representation $G$ of $B\dual$ the edge $e$ is a link in $G$, then $B'$ is bicircular but not cobicircular.
\end{proposition}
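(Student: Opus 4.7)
My plan is to prove the two claims separately.

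For showing $B'$ is bicircular, I will fix a bicircular representation $B=B(H)$, construct $H'$ from $H$ by subdividing $e$ with a new vertex $w$ (so $e$ is replaced by two edges $e_1,e_2$ incident to $w$), and identify $B(H')$ with $B'$. The hypothesis rules out $e$ being a matroid coloop of $B$: otherwise $e$ would be a matroid loop of $B\dual$, hence a free edge in every bicircular representation of $B\dual$, contradicting its being a link. Together with the standing requirement that $e$ is not a matroid loop of $B$ (needed for the coparallel extension to make sense), this forces $e$ to appear in $H$ as either a link or a graph loop, so the subdivision is well-defined. Because $e_2$ is a link in $H'$, contracting it recovers $H$ and gives $B(H')/e_2=B$. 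A short application of the rank formula $r(B(G))=|V(G)|-(\text{number of tree components of }G)$ shows that removing a single $e_i$ from $H'$ preserves the rank (using that $H\bs e$ still has a cycle, by the non-coloop hypothesis), while removing both isolates $w$ and drops the rank by exactly one; hence $\{e_1,e_2\}$ is a $2$-cocircuit of $B(H')$. These two properties uniquely characterize the series (coparallel) extension of $B$ at $e$ --- via duality with parallel extension, which is clearly unique --- so $B(H') = B'$.

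For showing $B'$ is not cobicircular, I argue by contradiction. Suppose $B'\dual=B(G')$ for some graph $G'$. Because $\{e_1,e_2\}$ is coparallel in $B'$, it is parallel in $B'\dual$, so $\{e_1,e_2\}$ is a $2$-element circuit of $B(G')$. Examining Figure~\ref{F:BicircularCircuits}, the only subdivisions of the listed bicircular-circuit templates with exactly two edges are the degenerate tight handcuffs formed by two graph loops at a common vertex; thus $e_1$ and $e_2$ must be two graph loops sharing a vertex $v$ in $G'$. Dualizing $B'/e_2=B$ gives $B'\dual\bs e_2=B\dual$, so $G'\bs e_2$ is a bicircular representation of $B\dual$; yet in it $e_1=e$ is still a graph loop at $v$ rather than a link, contradicting the hypothesis.

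The pivotal observation is the characterization of $2$-element bicircular circuits as pairs of graph loops at a common vertex: it funnels every candidate bicircular representation of $B'\dual$ into a configuration the hypothesis explicitly forbids. The rank calculation and the uniqueness of series extension in the first half are routine; the only creative step is choosing subdivision at $e$ as the correct graph surgery to realize the coparallel extension bicircularly.
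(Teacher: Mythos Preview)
Your argument is correct and follows essentially the same route as the paper's. For the ``not cobicircular'' half, both you and the paper delete the new element from a hypothetical bicircular representation of $(B')^*$ to obtain a bicircular representation $G$ of $B^*$, and then invoke the key fact that a link of $G$ can never lie in a $2$-circuit of a bicircular matroid; you spell out explicitly that $2$-circuits must be pairs of graph loops at a common vertex, whereas the paper states this implicitly. For the ``bicircular'' half, the paper gives no argument at all, so your subdivision construction and rank verification are a genuine addition rather than a different method.
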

\begin{proof}
A potential bicircular representation of $(B')\dual$ would be obtained from a bicircular representation $G$ of $B\dual$ by adding an edge, call it $f$. However, since $e$ is a link in $G$, there is no way that $e$ and $f$ could be parallel elements in $B(G\cup f)$.
\end{proof}

\begin{proposition} \label{P:L6'}
The matroid $U_{2,6}'$ is bicircular but minimally not cobicircular. Figure \ref{F:L6'} shows all possible bicircular representations of $U_{2,6}'$ and Figure \ref{F:U46} shows all possible bicircular representations of $U_{4,6}$.
\end{proposition}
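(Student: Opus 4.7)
The plan is to combine a direct case analysis of possible bicircular representations with a clean application of Proposition~\ref{P:ParallelCoparallelElements} and a minor-by-minor check for minimality. The bicircularity of $U_{2,6}'$ is witnessed by any graph displayed in Figure~\ref{F:L6'}, which I would verify by identifying the circuits directly; similarly one confirms that both $C_4^{++}$ and $K_4$ in Figure~\ref{F:U46} represent $U_{4,6}$.

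To prove that those lists are exhaustive, I would appeal to Proposition~\ref{P:Wagner3Biconnectivity}. Since $U_{4,6}$ is $3$-connected and simple, any bicircular representation $G$ must be $2$-connected with minimum degree at least $3$ and no two loops at a common vertex; moreover $|V(G)|=4$ since the rank of $B(G)$ is $4$, and $|E(G)|=6$. A handshake count then forces every vertex to have degree exactly $3$ (counting each loop as contributing $2$), so a short case analysis based on how many vertices carry a loop yields precisely the two graphs in Figure~\ref{F:U46}. An analogous enumeration for rank $3$ and $7$ edges, with one prescribed coparallel pair, produces the list in Figure~\ref{F:L6'}; here the series pair in $U_{2,6}'$ must appear as a degree-$2$ vertex between two links (the subdivision of an edge) or in a small number of other local configurations, each of which can be ruled out or matched to a graph in the list.

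For non-cobicircularity, I would invoke Proposition~\ref{P:ParallelCoparallelElements} with $B=U_{2,6}$: this $B$ is bicircular (via $B(6K_2)=U_{2,6}$) and cobicircular (since $B\dual=U_{4,6}$ is bicircular by Figure~\ref{F:U46}), so it is doubly bicircular. Every edge of $C_4^{++}$ and of $K_4$ is a link, so for any element $e$ of $U_{2,6}$ the hypothesis of Proposition~\ref{P:ParallelCoparallelElements} is satisfied, and hence $U_{2,6}'=B'$ is not cobicircular. For minimality, I would use the transitive symmetry group of $U_{2,6}'$ to reduce to two orbits of elements (inside or outside the coparallel pair) and check that the four resulting minors $U_{2,6}'/e$ and $U_{2,6}'\bs e$ (one representative from each orbit) are cobicircular. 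Contracting an element of the coparallel pair gives back $U_{2,6}$, which is cobicircular; the remaining three minors are small matroids on at most six elements whose duals admit explicit bicircular representations that I would exhibit directly. The main obstacle I anticipate is not the application of Proposition~\ref{P:ParallelCoparallelElements} itself, but rather the representation-enumeration step for $U_{2,6}'$: the series pair can manifest in several graph-theoretic ways, so ruling out everything except the list in Figure~\ref{F:L6'} will require careful bookkeeping.
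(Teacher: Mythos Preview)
Your proposal is correct and follows essentially the same route as the paper: enumerate the bicircular representations of $U_{4,6}$ by the degree-count argument, observe that every edge in each representation is a link, and then invoke Proposition~\ref{P:ParallelCoparallelElements} to conclude non-cobicircularity, with minimality left as a routine check. One small slip: the automorphism group of $U_{2,6}'$ is not transitive (that is precisely why you get two orbits), so you should simply say ``use the symmetry group'' rather than ``transitive symmetry group.''
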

\begin{proof}
That the graphs in Figure \ref{F:L6'} are the complete list all possible bicircular representations of $U_{2,6}'$ is clear. A bicircular representation $G$ of $U_{2,6}\dual=U_{4,6}$ must have four vertices of degree at least 3 each. Since $G$ has 6 edges, we now get that all vertices have degree 3.  All possible 2-connected cubic graphs on four vertices are shown in Figure \ref{F:L6'} and both are bicircular representations of $U_{4,6}$. Since there is no bicircular representation of $U_{4,6}$ which uses loops, Proposition \ref{P:ParallelCoparallelElements} implies that $U_{2,6}'$ is not cobicircular. The reader can check that $U_{2,6}'$ is minimally not cobicircular.
\end{proof}

\begin{figure}[H]
\begin{center}
\includegraphics[page=6,scale=0.6]{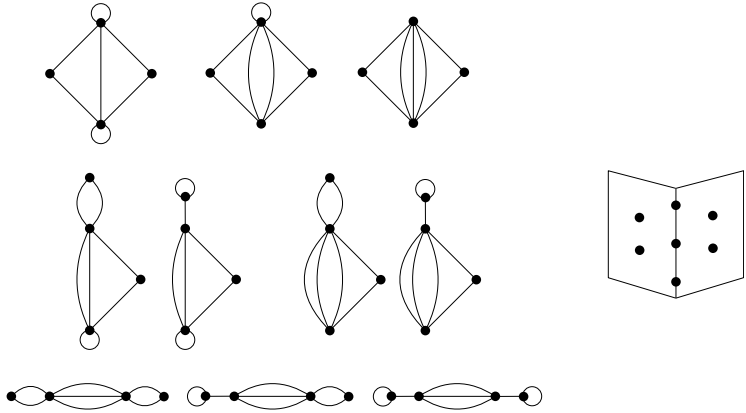}
\end{center}
\caption{The matroid $U_{2,5}''$ and its bicircular representations.} \label{F:L5''}
\end{figure}

\begin{figure}[H]
\begin{center}
\includegraphics[page=10,scale=0.6]{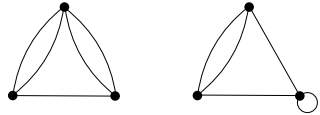}
\end{center}
\caption{The bicircular representations of $U_{3,5}$.} \label{F:U35}
\end{figure}

\begin{proposition} \label{P:L5''}
The matroid $U_{2,5}''$ is bicircular but minimally not cobocircular. Figure \ref{F:L5''} shows all possible bicircular representations of $U_{2,5}''$. Figure \ref{F:U35} shows all possible bicircular representations of $U_{3,5}$.
\end{proposition}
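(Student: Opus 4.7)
My plan for Proposition \ref{P:L5''} follows the template of Propositions \ref{P:L7} and \ref{P:L6'}. First I enumerate the bicircular representations of $U_{3,5}$. Since $U_{3,5}$ is 3-connected of rank $3$ with no matroid loops, Proposition \ref{P:Wagner3Biconnectivity} forces any representing graph $G$ to be a connected multigraph on exactly three vertices with five edges, $2$-connected, with no degree-$2$ vertex, and at most one graph-loop incident to any vertex. The degree sum $2|E(G)|=10$ with minimum degree $3$ forces the degree sequence $(3,3,4)$, and a short case split on the number of graph-loops (which must be $0$ or $1$) yields exactly the two graphs shown in Figure \ref{F:U35}. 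A similar but longer enumeration, using that $U_{2,5}''$ has rank $4$, seven elements, and two coparallel pairs, produces the representations in Figure \ref{F:L5''}.

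The key observation for non-cobicircularity is that in any bicircular matroid $B(G)$, a pair $\{e,f\}$ is a 2-circuit if and only if $e$ and $f$ are two distinct graph-loops incident to a common vertex of $G$. (A 2-element edge set fails the pseudoforest condition only when it produces two cycles in a single component, and among two-edge subgraphs this happens precisely for two loops at the same vertex; two parallel links form only a single cycle and are independent.) Suppose for contradiction that $(U_{2,5}'')\dual$ is bicircular with representing graph $H$. The two coparallel pairs of $U_{2,5}''$ become two disjoint 2-circuits of $B(H)$, say $\{e_1,f_1\}$ and $\{e_2,f_2\}$, and by the observation each pair consists of two graph-loops at a common vertex of $H$. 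Deleting $f_1$ and $f_2$ from $H$ produces a bicircular representation of $U_{3,5}$ in which both $e_1$ and $e_2$ appear as graph-loops, contradicting the fact that no graph in Figure \ref{F:U35} contains two graph-loops. Since $\aut(U_{3,5})\cong S_5$ acts 2-transitively, the identification of the coparallel elements of $U_{2,5}''$ with elements of $U_{3,5}$ may be taken arbitrary, so the obstruction is universal. (This also explains why $U_{2,5}''$ is the relevant excluded matroid rather than $U_{2,5}'$: a single coparallel pair only requires one graph-loop in the underlying representation of $U_{3,5}$, which graph B of Figure \ref{F:U35} supplies, so $U_{2,5}'$ is itself cobicircular.)

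I expect the main obstacle to be the enumeration of Figure \ref{F:L5''}: because $U_{2,5}''$ is not 3-connected (its coparallel pairs provide 2-separations), Proposition \ref{P:Wagner3Biconnectivity} does not apply directly, and the case analysis must instead be driven by the structural constraint each 2-cocircuit imposes on the representing multigraph, together with the rank and edge count. For minimality, each proper single-element minor of $U_{2,5}''$ is checked directly to be cobicircular; in particular, deletion of one element from a coparallel pair produces $U_{2,5}'$, which as noted above is cobicircular, and the remaining deletions and contractions reduce to small matroids whose double-bicircularity is routine to verify.
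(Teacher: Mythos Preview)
Your proposal is correct and follows essentially the same approach as the paper: enumerate the bicircular representations of $U_{3,5}$, observe that none of them has two graph-loops, and conclude that the two parallel pairs required in a bicircular representation of $(U_{2,5}'')\dual$ cannot both be realized. The only difference is packaging: the paper bundles your ``key observation'' (that a 2-circuit in $B(G)$ forces two loops at a common vertex) into its reference to Proposition~\ref{P:ParallelCoparallelElements}, whereas you spell it out directly; and the paper leaves the enumerations and minimality check to the reader, while you sketch the degree-sequence argument for Figure~\ref{F:U35} explicitly.
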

\begin{proof}
That the graphs shown are all possible bicircular representations of $U_{2,5}''$ and $U_{3,5}$ can easily be checked by the reader. Since $U_{2,5}\dual=U_{3,5}$ and there is at most one loop in a bicircular representation of $U_{3,5}$ it follows that  $U_{2,5}''$ is not cobicircular as in the proof of Proposition \ref{P:ParallelCoparallelElements}. That $U_{2,5}''$ is minimally not cobicircular can be checked by the reader.
\end{proof}

\begin{figure}[H]
\begin{center}
\includegraphics[page=11,scale=0.8]{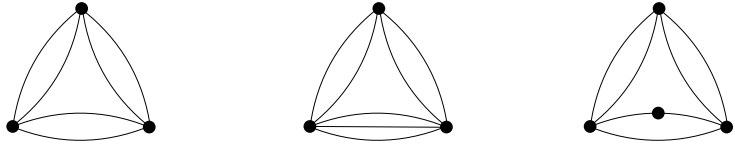}
\end{center}
\caption{The only bicircular representation of $U_{3,6}$ is $T_{2,2,2}$ \cite[Lemma 2.12]{Zaslavsky:BG2}. The matroids $B(T_{3,2,2})$ and $B(T_{2,2,2}')$ are both minimally not cobicircular.} \label{F:U36}
\end{figure}

\begin{proposition} \label{P:T223}
The only bicircular representation of $U_{3,6}$ is $T_{2,2,2}$. The matroids $B(T_{3,2,2})$ and $B(T_{2,2,2}')$ are both minimally not cobicircular.
\end{proposition}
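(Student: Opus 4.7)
The first sentence, that $T_{2,2,2}$ is the unique bicircular representation of $U_{3,6}$, is exactly Lemma 2.12 of \cite{Zaslavsky:BG2} and only needs to be cited. The matroids $B(T_{3,2,2})$ and $B(T_{2,2,2}')$ are bicircular by virtue of their defining graphs, so the substance is to prove that each is not cobicircular and is minor-minimal with this property.

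The plan for non-cobicircularity of $B(T_{3,2,2})$ is by contradiction. Suppose $B(H) = B(T_{3,2,2})\dual$ for some graph $H$. Because $T_{3,2,2}$ is 2-connected with minimum degree $4$ and no loops, Proposition \ref{P:Wagner3Biconnectivity} says $B(T_{3,2,2})$ is 3-connected, hence so is $B(H)$, and the same proposition forces $H$ to be a 2-connected multigraph on $4$ vertices and $7$ edges with no degree-$2$ vertex and at most one loop per vertex. Let $e_3$ denote one element of the parallel triple $\{e_1, e_2, e_3\}$ of $T_{3,2,2}$. Then $B(T_{3,2,2}) \setminus e_3 = B(T_{2,2,2}) = U_{3,6}$, so by duality $B(H)/e_3 = U_{3,6}$. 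If $e_3$ were a loop of $H$, the bicircular contraction rules of Section 2 would introduce a loop or free edge in $H/e_3$ (or else force a degree-$2$ vertex in $H$), contradicting that the representation of $U_{3,6}$ is the loop-free graph $T_{2,2,2}$. So $e_3$ is a link, $H/e_3$ is the ordinary graph contraction, and by the uniqueness already established $H/e_3 = T_{2,2,2}$. Thus $H$ is obtained from $T_{2,2,2}$ by splitting a vertex $v$, partitioning its four incident edges into two pairs of size two (forced by Wagner's degree condition), and inserting $e_3$ as the new link. Up to the symmetries of $T_{2,2,2}$ only two such uncontractions arise, and in each case a direct computation shows $B(H)$ carries more $4$-element circuits than $B(T_{3,2,2})\dual$ allows, giving the contradiction. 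The argument for $B(T_{2,2,2}')$ follows the same template: identify the element distinguishing $T_{2,2,2}'$ from $T_{2,2,2}$, delete it to recover $U_{3,6}$, and use the uniqueness of its bicircular representation to eliminate every candidate graph $H$.

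Minimality follows from the symmetries of $T_{3,2,2}$ and $T_{2,2,2}'$: only a few orbit representatives of single-element deletions and contractions need to be examined, and each resulting minor lies within the doubly bicircular families already cataloged, namely minors of $U_{3,6}$, minors of free swirls, or minors of the sporadic graphs in Figure \ref{F:SporadicFamilies}. The main obstacle is the finite case analysis for candidate graphs $H$: while Wagner's conditions and the uniqueness of the $U_{3,6}$ representation aggressively prune the list, each remaining candidate must still be explicitly compared against the required circuit profile of the dual.
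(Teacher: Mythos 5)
Your argument is sound but follows a genuinely different route from the paper's for the key step. For $B(T_{3,2,2})$ the paper does not pin down the candidate dual representation $H$ at all: it observes that $H$ would have $4$ vertices and $7$ edges with minimum degree $3$, so at least two vertices of degree exactly $3$; their vertex stars are cotriangles of $B(H)=B\dual(T_{3,2,2})$, hence triangles of $B(T_{3,2,2})$, which has only the one triangle $\{a_1,a_2,a_3\}$ --- contradiction in two lines. You instead contract the extra parallel edge, invoke the uniqueness of the $U_{3,6}$ representation to force $H/e_3=T_{2,2,2}$, classify the two vertex-splittings compatible with Wagner's degree condition, and compare $4$-element circuit counts; I checked that both resulting graphs ($2C_4$ minus a parallel edge, and $K_4$ with a doubled edge) have two $4$-circuits while $B\dual(T_{3,2,2})$ has exactly one, so your deferred computation does close. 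Your method is more laborious but more explicit; the paper's counting argument is the cheaper one. For $B(T_{2,2,2}')$ the paper simply cites Proposition \ref{P:ParallelCoparallelElements} (every element of the unique representation $T_{2,2,2}$ of $U_{3,6}\cong U_{3,6}\dual$ is a link, so no coparallel pair can be realized dually); your ``same template'' re-derives this, but note one slip in your description: \emph{deleting} a subdivision edge from $T_{2,2,2}'$ does not recover $U_{3,6}$ (it produces $U_{3,5}$ plus a coloop); you must \emph{contract} it, or equivalently delete the parallel mate in the hypothetical dual representation $H$ and then observe that the returned edge cannot be parallel to a link. Your treatment of minimality is at the same level of detail as the paper's, which also leaves it to the reader.
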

\begin{proof}
That $T_{2,2,2}$ is the only bicircular representation of $U_{3,6}$ was noted in \cite[Lemma 2.12]{Zaslavsky:BG2}. Since $U_{3,6}\dual\cong U_{3,6}$, we get that $B(T_{2,2,2}')$ is not cobicircular by Proposition \ref{P:ParallelCoparallelElements}. The reader may check for minimality. It is evident that every single-element contraction and deletion of $B(T_{3,2,2})$ is cobicircular. Now consider and edge $e$ such that $B(T_{3,2,2})\bs e=B(T_{2,2,2})$. So now $B\dual(T_{3,2,2})/e\cong B(T_{2,2,2})$. Thus if we assume that $B(G)=B\dual(T_{3,2,2})$, then $B(G)$ would be forced to have two vertices of degree 3. These two degree-3 vertices in $G$ form cotriangles $B(G)=B\dual(T_{3,2,2})$ and so form triangles in $B(T_{2,2,3})$; however, $B(T_{2,2,3})$ contains only one triangle, a contradiction.
\end{proof}

Given a graph $G$, we let $G^\ell$ be the graph obtained from $G$ by adding a loop to some
vertex. If $G$ is a loopless graph, then $G^\circ$ is the graph obtained from $G$ by adding a loop at each vertex.

\begin{proposition} \label{P:3Whirl}
The only bicircular representation of $\mc W^3$ (i.e., the rank-3 whirl) is $C_3^\circ$.
\end{proposition}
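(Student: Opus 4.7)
My plan is to show that any bicircular representation $G$ of $\mc W^3$ is forced to be $C_3^\circ$ by a short structural count. Since $\mc W^3$ has rank $3$, six elements, and no matroid loops, $G$ has no free edges and (as $\mc W^3$ is matroid-connected) $G$ is connected. The only way for a connected graph with six edges to have $r(B(G))=3$ is $|V(G)|=3$: a tree on four vertices would have only three edges, whereas a connected graph with a cycle satisfies $r(B(G))=|V(G)|$. Applying Proposition \ref{P:Wagner3Biconnectivity} I then conclude that $G$ is $2$-connected, has minimum degree at least $3$, and has no vertex carrying two loops. In particular every pair of vertices is joined by at least one link, and $G$ has at most three loops.

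Next I would parameterise $G$ by $n_i\in\{0,1\}$ (indicator that vertex $i$ carries a loop) and $m_{ij}\ge 1$ (number of links between $i$ and $j$), and enumerate the finitely many tuples satisfying
\[
n_1+n_2+n_3+m_{12}+m_{13}+m_{23}=6\quad\text{and}\quad 2n_i+m_{ij}+m_{ik}\ge 3\ \text{for each}\ i.
\]
This is a short case list modulo the $S_3$-action permuting the three vertices.

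For each candidate $G$ I would count the three-element circuits of $B(G)$. A three-edge bicycle must take one of three shapes: (i) two loops at distinct vertices joined by a single link, (ii) a loop at a vertex $i$ together with a double link $ij$, or (iii) a triple link between two vertices. Summing over unordered pairs yields the triangle count
\[
\sum_{i<j}\Bigl[\,n_i n_j m_{ij}+(n_i+n_j)\binom{m_{ij}}{2}+\binom{m_{ij}}{3}\,\Bigr].
\]
Since $\mc W^3$ has exactly three triangles, I would check that the only admissible parameter tuple producing the value $3$ is $n_1=n_2=n_3=m_{12}=m_{13}=m_{23}=1$, i.e., $G=C_3^\circ$; every other admissible tuple evaluates to $0$, $1$, or $2$. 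A direct identification of the triangles $\{\ell_i,e_{ij},\ell_j\}$ of $B(C_3^\circ)$ with those of $\mc W^3$ (loops $\leftrightarrow$ spokes, triangle links $\leftrightarrow$ rim edges) then confirms $B(C_3^\circ)\cong\mc W^3$.

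The main obstacle is simply bookkeeping the enumeration: making sure no admissible $(n_i,m_{ij})$-tuple (up to vertex relabelling) is overlooked. The closed-form triangle count reduces each subcase to a single arithmetic check, so no conceptual difficulty remains beyond carrying out that finite verification.
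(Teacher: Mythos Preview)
Your proposal is correct and follows a genuinely different route from the paper's proof. The paper argues constructively: it fixes two intersecting triangles $\{1,2,3\}$ and $\{1,4,5\}$ of $\mc W^3$, draws the three possible shapes for the subgraph $G\setminus 6$ realising them, observes that $2$-connectedness forces edge $6$ to be a link bridging the cut vertex in each picture, and then checks by inspection which of the three resulting graphs actually represents $\mc W^3$. Your argument instead pins down $|V(G)|=3$ globally via the rank formula, invokes Proposition~\ref{P:Wagner3Biconnectivity} to obtain the constraints $n_i\in\{0,1\}$, $m_{ij}\geq 1$, and $\deg(i)\geq 3$, and then separates the finitely many admissible $3$-vertex multigraphs by their triangle count.

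What each approach buys: the paper's method is quick once one has the pictures, and it illustrates a technique (build the representation from a few known small circuits) that recurs elsewhere in Section~\ref{S:ExcludedMinors}. Your method is more self-contained and mechanical; it avoids any appeal to a figure and makes explicit why three vertices are forced. Your closed-form count of $3$-element bicycles supported on a pair $\{i,j\}$ is correct (the three shapes you list exhaust the theta, tight-handcuff, and loose-handcuff configurations on three edges in a $3$-vertex graph, and no $3$-edge bicycle can span all three vertices), and the enumeration does indeed yield $3$ only for $C_3^\circ$; the remaining admissible tuples give $0$, $1$, or $2$. Just be sure, when you write it up, to record the handful of admissible tuples explicitly so the reader can see the triangle count for each---that is the only place where a slip could occur.
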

\begin{proof}
Let $\{1,2,3,4,5,6\}$ be the groundset of $\mc W^3$ and let $G$ be a bicircular representation of $\mc W^3$. Consider the two triangles $\{1,2,3\}$ and $\{1,4,5\}$ in $\mc W^3$. Evidently there are three possibilities for $G\bs 6$. They are shown in the first row of Figure \ref{F:3Whirl}.

\begin{figure}[H]
\begin{center}
\includegraphics[page=17,scale=0.7]{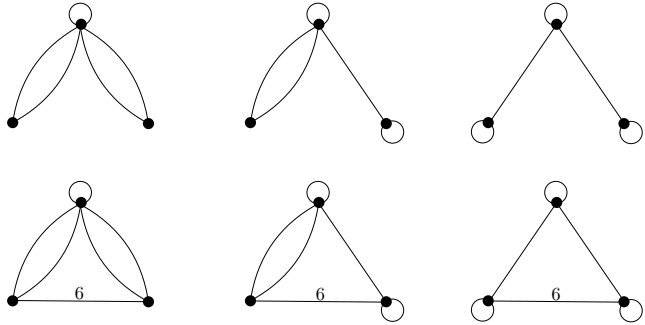}
\end{center}
\caption{The sixth graph shown is the only bicircular representation of $\mc W^3$.} \label{F:3Whirl}
\end{figure}

Since $\mc W^3$ is 3-connected, $G$ must be 2-connected and so edge 6 must be a link bridging the cut vertex of $G\bs 6$. Thus the three possibilities for $G$ are shown in the second row of Figure \ref{F:3Whirl}; however, only the sixth graph is actually a bicircular representation for $\mc W^3$, as required.
\end{proof}

\begin{proposition} \label{P:ExcludedMinors3ConnectedGraphs}
The matroids $B(W_4)$ and $B(K_4^\ell)$ are minimally not cobicircular.
\end{proposition}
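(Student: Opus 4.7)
The plan is to follow the template of Proposition~\ref{P:T223}, handling the non-cobicircularity and the minimality check for each of $B(W_4)$ and $B(K_4^\ell)$. For $B(K_4^\ell)$, the loop-contraction rule of Section~2 gives $B(K_4^\ell)/\ell = B(C_3^\circ) = \mc W^3$, so by duality $B(K_4^\ell)\dual\bs\ell = \mc W^3$. If we assume $B(G) = B(K_4^\ell)\dual$, then $B(G\bs\ell) = \mc W^3$, and by Proposition~\ref{P:3Whirl} we conclude $G\bs\ell = C_3^\circ$. The element $\ell$ is not a coloop of $B(K_4^\ell)$, so it cannot be a free edge of $G$; adding it as a second loop at any vertex would violate Proposition~\ref{P:Wagner3Biconnectivity}; hence $\ell$ must be a link, and by the symmetry of $C_3^\circ$ the graph $G$ is forced, up to isomorphism, to be $C_3^\circ$ with one doubled link. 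A count of $3$-element circuits of $B(G)$ (six in total, arising from the four loop-link-loop dumbbells and the two figure-eights on the doubled link) against the cotriangles of $B(K_4^\ell)$ (three in number, corresponding to the three figure-eights through $\ell$) then produces the contradiction.

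For $B(W_4)$ the plan is similar but more delicate, as $B(W_4)\dual$ has rank~$3$ on $8$ elements. Since $W_4$ is simple and loopless, every bicircular circuit of $B(W_4)$ has at least $5$ edges, so $B(W_4)\dual$ has no $3$- or $4$-element cocircuits. A candidate $G$ with $B(G) = B(W_4)\dual$ must therefore be a $3$-vertex, $2$-connected graph on $8$ edges satisfying Proposition~\ref{P:Wagner3Biconnectivity}, and $B(G)$ must have exactly as many triangles as $B(W_4)$ has cotriangles; these cotriangles are the four star cocircuits at the degree-$3$ rim vertices, giving the count~$4$. A short case analysis on the link multiplicities $(a,b,c)$ between the three pairs of vertices and on the loop pattern reduces the candidate up to isomorphism to $(a,b,c)=(4,2,2)$ with no loops. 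For this $G$, however, the four edges not in the multiplicity-$4$ pair form a $4$-element cocircuit of $B(G)$, contradicting the absence of $4$-element circuits in $B(W_4)$.

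Minimality for both matroids is checked first. The automorphisms of $W_4$ (transitive on spokes and on rim edges) and of $K_4^\ell$ (transitive on non-loop edges, with the loop fixed) reduce the verification to a handful of single-element deletions and contractions; for each of these one either exhibits a bicircular representation of the dual directly or identifies the result with a previously analyzed doubly bicircular matroid from this section or from Theorem~\ref{T:StrongerThenMain}. The main obstacle is the $B(W_4)$ step: enumerating the $3$-vertex candidates for $G$, establishing that the triangle count forces $(a,b,c)=(4,2,2)$, and then isolating the $4$-element cocircuit of $B(G)$ with no counterpart in $B(W_4)$ is the key matroid-theoretic observation and the most computationally involved part of the argument.
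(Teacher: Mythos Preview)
Your argument for $B(K_4^\ell)$ is essentially the paper's, and is correct. One small slip: the three cotriangles of $B(K_4^\ell)$ are the vertex stars at the three non-loop vertices of $K_4$, not ``figure-eights through $\ell$''; the figure-eights through $\ell$ are the four-element \emph{hyperplanes} complementary to those cotriangles. The count of three is right, so the contradiction stands.

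Your argument for $B(W_4)$ is correct but takes a genuinely different route from the paper. The paper does not enumerate three-vertex graphs. Instead it picks two non-adjacent rim edges $e,f$, observes $W_4/\{e,f\}\cong T_{2,2,2}$, and uses the uniqueness of $T_{2,2,2}$ as a representation of $U_{3,6}$ to conclude that any $G$ with $B(G)=B\dual(W_4)$ satisfies $G\bs\{e,f\}=T_{2,2,2}$. Since $B\dual(W_4)\bs f=(B(W_4/f))\dual$ is automatically cobicircular while $B(T_{3,2,2})$ is not (Proposition~\ref{P:T223}), neither $e$ nor $f$ can be reinserted as a link; both must be loops at distinct vertices, so $G=2C_5/\{x,y\}$ for non-adjacent links $x,y$. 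Self-duality of the free swirl $B(2C_5)$ then gives $B\dual(G)\cong B(2C_5\bs\{x,y\})$, and the paper finishes by invoking Wagner's theorem that $W_n$ $(n\ge4)$ is the unique bicircular representation of $B(W_n)$, which $2C_5\bs\{x,y\}$ visibly violates. Your approach trades these external ingredients (Proposition~\ref{P:T223}, swirl self-duality, Wagner's uniqueness) for a direct triangle count and a cocircuit check; it is more self-contained, at the cost of the $(a,b,c,\ell)$ case analysis, which you should spell out rather than merely assert.
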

\begin{proof}
Note that $B(K_4^\ell)/\ell$ is the rank-3 whirl. Since $(\mc W^3)\dual\cong \mc W^3$, the only possible bicircular representation for $B\dual(K_4^\ell)$ would be the graph $G$ obtained from $C_3^\circ$ with $\ell$ added as a link. However, $B(G)$ has six triangles while $B(K_4^\ell)$ has only three cotriangles, a contradiction. We leave it to the reader to check minimality.

Consider the 4-wheel $W_4$ and let $e$ and $f$ be non-adjacent edges on the rim. Note that $W_4/\{e,f\}\cong T_{2,2,2}$. Thus $B\dual(W_4)\bs\{e,f\}=B\dual(W_4/\{e,f\})\cong U_{3,6}\dual\cong U_{3,6}\cong B(T_{2,2,2})$. Since $B(T_{2,2,3})$ is not cobicircular (Proposition \ref{P:T223}) the only possible bicircular representation of $B\dual(W_4)$ would be the graph $G$ obtained from $T_{2,2,2}$ with $e$ and $f$ added as loops at two different vertices. Hence $G=2C_5/\{x,y\}$ where $x$ and $y$ are non-adjacent links. Thus $B\dual(G)\cong B(2C_5\bs\{x,y\})\cong B(W_4)$; however, this contradicts the result of Wagner that the wheels $W_n$ with $n\geq4$ are the unique bicircular representations of their bicircular matroids \cite[Proposition 5]{Wagner:ConnectivityBicircular}. We leave it to the reader to check minimality.
\end{proof}

\begin{figure}[H]
\begin{center}
\includegraphics[page=7,scale=0.7]{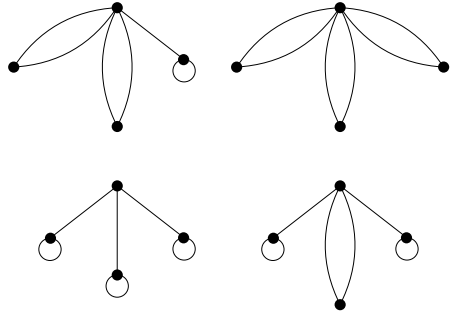}
\end{center}
\caption{These four graphs are all possible bicircular representations of the graphic matroid $M(K_{2,3})$.} \label{F:K23}
\end{figure}

The proof of Proposition \ref{P:K23} is left to the reader.

\begin{proposition} \label{P:K23}
The only bicircular representations of the graphic matroid $M(K_{2,3})$ are those shown in Figure \ref{F:K23}. The matroid $M(K_{2,3})$ is minimally not cobicircular.
\end{proposition}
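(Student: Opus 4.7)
The plan is to enumerate the bicircular representations $G$ of $M(K_{2,3})$ via an analysis of the three 2-cocircuits of $M(K_{2,3})$, and then to verify that $M(K_{2,3})$ is minimally non-cobicircular. First I would observe that elementary rank and cardinality considerations ($M(K_{2,3})$ has rank 4, 6 elements, no loops, no coloops, and no parallel elements) force $G$ to be a connected 4-vertex graph with 6 edges, no free edges, no tree components, and no vertex carrying two loops.

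The crux will be to exploit that the three 2-cocircuits $P_j=\{u_1v_j,u_2v_j\}$, $j=1,2,3$, partition the element set. I would argue that a pair $\{e,f\}$ is a 2-cocircuit of $B(G)$ exactly when removing the pair drops the rank by one, and on a connected 4-vertex 6-edge graph this happens in one of two ways: (i) the two edges are precisely all edges incident to a single vertex, isolating that vertex, or (ii) removing them produces an acyclic component alongside a cyclic one. A case analysis of (ii) shows the cyclic component has at most two vertices and at most three edges, and every such 2-vertex multigraph contains a bicircular circuit of size 2 or 3 (two loops at a common vertex give a 2-circuit, a digon and a loop sharing a vertex give a figure-eight of size 3, three parallel links give a theta of size 3, a loop-link-loop gives a handcuff of size 3, and so on), contradicting that all circuits of $M(K_{2,3})$ have size 4. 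Hence each $P_j$ is of type (i), isolating a distinct leaf vertex $v_j$ of $G$, and the remaining vertex serves as the ``center'' $c$.

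Pairwise disjointness of the $P_j$ prevents any edge from joining two leaves (otherwise it would lie in $P_j\cap P_k$), so each of the two edges at $v_j$ is either a loop at $v_j$ or a link to $c$. The only admissible configurations for the pair at $v_j$ are then a pair of parallel links to $c$ (``digon-leaf'') or a single link to $c$ together with a loop at $v_j$ (``loop-leaf''), the two-loops-at-a-vertex option having been excluded. Furthermore, $c$ carries no loop, since any loop at $c$ lies in no $P_j$ whereas every element must lie in some $P_j$. Up to isomorphism, $G$ is therefore determined by the number $a\in\{0,1,2,3\}$ of digon-leaves, yielding precisely the four graphs of Figure \ref{F:K23}.

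For the minimality claim, I would first show $M(K_{2,3})\dual\cong M(2C_3)$ is not bicircular: any $H$ with $B(H)=M(2C_3)$ has rank 2 and no coloops, and the three disjoint size-2 circuits of $M(2C_3)$ force three distinct vertices of $H$ each bearing exactly two loops, which already contributes rank 3 to $B(H)$, a contradiction. Edge-transitivity of $K_{2,3}$ reduces the single-element minor check to one deletion and one contraction. The deletion $M(K_{2,3})\bs e \cong M(K_{2,3}-e)$ has dual isomorphic to $U_{1,4}$ together with a matroid loop, bicircularly realized by a single vertex with four loops plus a free edge. The contraction $M(K_{2,3})/e \cong M(K_4-f)$ has dual realized by two vertices joined by a single link with two loops at each vertex (the two size-2 circuits are the loop pairs, and the four size-3 handcuffs comprise a loop at one vertex, the link, and a loop at the other vertex). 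Both proper minors are cobicircular, so $M(K_{2,3})$ is minimally not cobicircular. The hard parts will be the case-(ii) elimination in the 2-cocircuit analysis and ensuring rigor in the rank-bounded argument ruling out bicircular representations of $M(2C_3)$.
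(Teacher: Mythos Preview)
The paper gives no proof of this proposition; immediately before the statement it reads ``The proof of Proposition \ref{P:K23} is left to the reader.'' So there is no approach in the paper to compare against.

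Your argument is correct. The key idea---using the three pairwise-disjoint 2-cocircuits $P_1,P_2,P_3$ of $M(K_{2,3})$, which partition the ground set, to force each $P_j$ to be the full edge-set at a distinct vertex of $G$---is clean and does the job. The elimination of case~(ii) is right: once the acyclic component of $G\setminus\{e,f\}$ has at least two vertices, the remaining (cyclic) part sits on at most two vertices with at most three edges, and every such multigraph contains a bicircular circuit of size at most~3, contradicting that $M(K_{2,3})$ has only 4-circuits. (You might note in passing that the ``cyclic part'' could in principle split into two single-vertex loop-components, but then one of them carries two loops and you again get a 2-circuit.) The disjointness of the $P_j$ then rules out leaf--leaf links and loops at the centre, leaving exactly the four graphs indexed by the number $a\in\{0,1,2,3\}$ of digon-leaves. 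One small omission: you show that any representation must be one of these four graphs, but you do not explicitly verify the converse, that each of the four actually has $M(K_{2,3})$ as its bicircular matroid; this is an easy check (each has exactly three 4-circuits with the correct pairwise intersections) but should be stated.

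For the cobicircular part, your identification $M^*(K_{2,3})\cong M(2C_3)$ is correct, and the rank argument (three disjoint parallel pairs force three vertices each carrying two loops, hence bicircular rank at least~3) is exactly the right obstruction. The minimality check via edge-transitivity of $K_{2,3}$ is valid, and your explicit bicircular representations of $(M(K_{2,3})\backslash e)^*\cong U_{1,4}\oplus U_{0,1}$ and of $(M(K_{2,3})/e)^*\cong M^*(K_4-f)$ are both correct.
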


\begin{figure}[H]
\begin{center}
\includegraphics[page=12,scale=0.8]{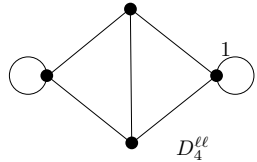}
\end{center}
\caption{The bicircular matroid of the graph $D_4^{\ell\ell}$ is minimally not cobicircular.} \label{F:LastExcludedMinor}
\end{figure}

\begin{proposition} \label{P:LastExcludedMinor}
The bicircular matroid $B(D_4^{\ell\ell})$ (see Figure \ref{F:LastExcludedMinor}) is minimally not cobicircular.
\end{proposition}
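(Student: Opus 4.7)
The plan is to follow the template established by Propositions \ref{P:T223} and \ref{P:ExcludedMinors3ConnectedGraphs}: assume a bicircular representation $G$ of $B\dual(D_4^{\ell\ell})$ exists, pass to a judiciously chosen two-element minor whose bicircular representations are already catalogued in this section, use that catalogue to pin the structure of $G$ down to a short list of candidates, and then eliminate each candidate by a small invariant count or by an appeal to one of the earlier uniqueness statements.

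For the minimality half of the statement, I would enumerate representatives of the edge orbits of $\aut(D_4^{\ell\ell})$ and, for each representative $e$, check that both $B(D_4^{\ell\ell})\bs e$ and $B(D_4^{\ell\ell})/e$ are cobicircular. Each such minor should reduce either to a minor of a free swirl or to one of the sporadic doubly bicircular matroids whose self-duality was recorded (by hand or in SageMath) in the introduction, so cobicircularity will be immediate from the catalogue in Figure \ref{F:SporadicFamilies}. This is the routine finite verification that the authors evidently leave to the reader.

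For the main (non-cobicircular) direction, the two graph-loops of $D_4^{\ell\ell}$, call them $\ell_1$ and $\ell_2$, are the natural handles. In $B\dual(D_4^{\ell\ell})$ they appear in forced coparallel relationships (since contracting a graph-loop in a bicircular matroid deletes the incident vertex and converts its links into loops at the other endpoint, as reviewed in Section 2). Deleting $\ell_1$ and $\ell_2$ from the dual corresponds to contracting them in $B(D_4^{\ell\ell})$, producing a smaller bicircular matroid $B(\tilde D)$ whose dual representations are controlled by one of Propositions \ref{P:T223}, \ref{P:ExcludedMinors3ConnectedGraphs}, or \ref{P:K23}. That pins $G\bs\{\ell_1,\ell_2\}$ down to a short explicit list, after which $\ell_1$ and $\ell_2$ must be re-attached to the skeleton as links or graph-loops in the finitely many ways consistent both with the coparallelism they impose and with the 3-connectivity forced by Proposition \ref{P:Wagner3Biconnectivity}.

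The main obstacle I expect is the final case analysis: several candidate attachments will survive the skeleton step, and each must be refuted separately. Following the pattern of the preceding propositions, the cleanest refutation invariants are a comparison of the triangle count of $B(G)$ with the cotriangle count of $B(D_4^{\ell\ell})$, a comparison of parallel/coparallel classes, or a direct appeal to Wagner's rigidity theorem \cite[Proposition 5]{Wagner:ConnectivityBicircular} that $W_n$ ($n\geq 4$) is the unique bicircular representation of its matroid, as used decisively in Proposition \ref{P:ExcludedMinors3ConnectedGraphs}. Choosing $\{\ell_1,\ell_2\}$ as the pair to remove is the strategic call that keeps this case split short enough to finish by hand.
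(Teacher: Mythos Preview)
Your high-level template --- reduce to a minor whose bicircular representations are already catalogued, then re-attach and compare a small invariant --- is exactly what the paper does. The execution, however, does not line up with any of the catalogue entries you cite, and one of your structural claims is false.

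The paper contracts a \emph{single} edge, the one labeled $1$ in Figure~\ref{F:LastExcludedMinor}, and observes that $B(D_4^{\ell\ell})/1\cong\mc W^3$. Proposition~\ref{P:3Whirl} then forces $H\bs 1\cong C_3^\circ$ for any putative bicircular representation $H$ of the dual. Re-attaching a single edge to $C_3^\circ$ gives only two shapes: a loop (ruled out because $B(D_4^{\ell\ell})$ is cosimple) or a doubled link (ruled out because edge $1$ would then lie in three triangles of $B(H)$ but in only one cotriangle of $B(D_4^{\ell\ell})$). That is the whole argument.

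Your proposed two-loop contraction misses this. Contracting both graph-loops of $D_4^{\ell\ell}$ drops the rank from $4$ to $2$ and leaves five elements; the dual you must then represent is a rank-$3$ matroid on five elements, which is not governed by any of Propositions~\ref{P:T223}, \ref{P:ExcludedMinors3ConnectedGraphs}, or \ref{P:K23} (those concern six-, seven-, and eight-element matroids). Moreover, your assertion that $\ell_1,\ell_2$ ``appear in forced coparallel relationships'' in $B\dual(D_4^{\ell\ell})$ cannot be right: $B(D_4^{\ell\ell})$ is $3$-connected, hence simple and cosimple, so its dual has no coparallel pairs at all. The fix is to abandon the two-loop reduction and instead contract the one edge that lands you on $\mc W^3$, where Proposition~\ref{P:3Whirl} does the work.
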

\begin{proof}
Note that $B(D_4^{\ell\ell})/1\cong\mc W^3$. If we assume by way of contradiction that $H$ is a bicircular representation of $B\dual(D_4^{\ell\ell})$, then $B(H\bs1)\cong\mc W^3$ and so $H\bs1\cong C_3^\circ$ by Proposition \ref{P:3Whirl}. Furthermore, edge $1$ cannot be added as a loop to $C_3^\circ$ because $B(D_4^{\ell\ell})$ is cosimple. Thus the only possibility for $H$ is $C_3^\circ$ with one link doubled. Edge 1 is now in three triangles of $B(H)$; however, edge 1 is only in one cotriangle of $B(D_4^{\ell\ell})$, a contradiction. We leave it to the reader to check minimality.
\end{proof}

\section{Proof of our main result}

\begin{proposition} \label{P:3Connected}
If $G$ is a 3-connected graph, then the following are equivalent.\begin{itemize}
\item[(1)] $B(G)$ is cobicircular.
\item[(2)] $G$ has no $W_4$-, $K_4^\ell$-, or $T_{2,2,3}$-minor.
\item[(3)] $G$ is a minor of $K_4^{++}$.
\end{itemize}
\end{proposition}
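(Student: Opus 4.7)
Plan: I would prove the equivalence via the cycle $(1)\Rightarrow(2)\Rightarrow(3)\Rightarrow(1)$. The implication $(1)\Rightarrow(2)$ is the contrapositive of the excluded-minor results of Section~\ref{S:ExcludedMinors}: a graph $W_4$-, $K_4^\ell$-, or $T_{2,2,3}$-minor in $G$ yields the corresponding bicircular-matroid minor in $B(G)$; each is minimally not cobicircular by Propositions~\ref{P:ExcludedMinors3ConnectedGraphs} and~\ref{P:T223} (noting $T_{2,2,3}$ and $T_{3,2,2}$ denote the same graph), and cobicircularity is minor-closed as the dual of a minor-closed class. The implication $(3)\Rightarrow(1)$ is immediate, since $B(K_4^{++})$ is self-dual (hence cobicircular) by the introduction and cobicircularity is minor-closed.

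For the substantive implication $(2)\Rightarrow(3)$, I would proceed in three stages. Stage~I establishes that $G$ is loopless: a loop $\ell$ at $v$ would leave $G\setminus\ell$ 3-connected on $\geq 4$ vertices, hence $G\setminus\ell$ has a $K_4$-minor, and by extending one of its branch sets along a path this minor can be arranged to contain $v$; reattaching $\ell$ then produces a $K_4^\ell$-minor, contradicting~(2). Stage~II shows $|V(G)|\leq 4$: if $|V(G)|\geq 5$, the simple graph $G_0$ underlying $G$ is 3-connected on $\geq 5$ vertices (vertex cuts are unaffected by parallel edges), and iterating Tutte's wheel theorem reduces $G_0$ through a chain of 3-connected simple minors terminating at $K_4$. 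At some point the chain must visit a 5-vertex graph, which is one of $W_4$, $K_5-e$, or $K_5$; each of these contains $W_4$ as a subgraph, so $G$ would have a $W_4$-minor, contradicting~(2).

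Stage~III constrains the edge multiplicities. With $|V(G)|=4$, no loops, and 3-connectedness, the underlying simple graph of $G$ must be $K_4$ (the unique 3-connected simple graph on 4 vertices). If some edge has multiplicity at least $3$, placing its two endpoints in singleton branch sets and the remaining two vertices as a third branch set joined by their connecting edge produces a 3-vertex minor with multiplicities at least $(3,2,2)$, i.e., a $T_{2,2,3}$-minor. If instead two edges of multiplicity at least $2$ share an endpoint, a similar branch-set choice (a 2-element branch set formed by one non-shared endpoint together with the fourth vertex, and the two remaining vertices as singletons) again yields a $(3,2,2)$-minor. Both are excluded by~(2), so $G$ has at most two doubled edges and any two doubled edges are disjoint; therefore $G$ is a subgraph, and hence a minor, of $K_4^{++}$.

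The main obstacle I anticipate is Stage~II: justifying carefully via Tutte's wheel theorem that a 5-vertex 3-connected simple minor does appear in the reduction chain, with the multigraph simplification steps handled correctly. Stages~I and~III then reduce to explicit branch-set constructions.
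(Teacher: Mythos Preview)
Your proof is correct and, like the paper's, hinges on Tutte's wheel theorem; but you deploy it differently. The paper argues \emph{generatively}: starting from $K_4$, it uses the wheel theorem to build $G$ by successive link additions and de-contractions (loops being ruled out by the $K_4^\ell$ exclusion), and checks step by step that after two link additions one is forced into $K_4^{++}$ and that no third step is possible without producing a $W_4$- or $T_{2,2,3}$-minor. You argue \emph{reductively}: you use the $K_4^\ell$ exclusion to eliminate loops, then the $W_4$ exclusion together with Tutte's theorem to bound $|V(G)|\le 4$ (via the fact that the only $3$-connected simple graphs on five vertices are $W_4$, $K_5-e$, and $K_5$, each containing $W_4$), and finally the $T_{2,2,3}$ exclusion to constrain edge multiplicities over the underlying $K_4$.

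One small point in Stage~II: Tutte's theorem terminates the reduction chain at some wheel $W_n$, not necessarily at $K_4$; but since $W_n$ has a $W_4$-minor for every $n\ge 4$ (contract rim edges), and $W_3=K_4$, the conclusion is unaffected. Your approach has the virtue of assigning each excluded minor a distinct role (loops, vertex count, multiplicities), while the paper's step-by-step extension argument is a little shorter but intertwines the three exclusions.
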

\begin{proof}
In the introduction we addressed the fact that $B(K_4^{++})$ is self dual and hence doubly bicircular. This proves $3\rightarrow1$. Propositions \ref{P:T223} and \ref{P:ExcludedMinors3ConnectedGraphs} prove $1\rightarrow2$. We now finish by proving $2\rightarrow3$.

 Let $G$ be a 3-connected graph which contains none of $W_4$, $K_4^\ell$, and $T_{2,2,3}$ as a minor. By Tutte's Wheel Theorem and the fact that there is no $K_4^\ell$-minor in $G$, $G$ is obtained from $K_4$ by a sequence of adding and de-contracting links while maintaining graph 3-connectedness at each step; furthermore, the first step must be adding a link, call it $e$, to $K_4$ to obtain the graph $K_4^+$. The second step cannot be decontraction because the only 3-connected graph that is a decontraction of $K_4^+$ is $W_4$. Thus the second step is adding another link to $K_4^+$, call it $f$. If $f$ is parallel to $e$, then $G$ contains a $T_{2,2,3}$-minor, a contradiction. If $f$ is adjacent to $e$ but not parallel to $e$, then again $G$ contains a $T_{2,2,3}$-minor, a contradiction. Thus adding $f$ to $K_4^+$ yields $K_4^{++}$.

The third step cannot be a decontraction because, again, a 3-connected decontraction of $K_4^{++}$ contains a  $W_4$-minor. The third step also cannot be adding a link because wherever a link is $K_4^{++}$ we obtain a graph with a $T_{2,2,3}$-minor, a contradiction. Thus $G$ is a minor of $K_4^{++}$.
\end{proof}

\begin{theorem} \label{T:StrongerThenMain}
If $B(G)$ is 3-connected, then the following are equivalent.
\begin{itemize}
\item[(1)] $B(G)$ is cobicircular.
\item[(2)] $G$ has no ordinary graph minor $H$ which is a bicircular representations of $U_{2,7}$, $U_{2,6}'$, $U_{2,5}''$, $M(K_{2,3})$, $B(T_{2,2,3})$ $B(T_{2,2,2}')$, $B(W_4)$, $B(K_4^\ell)$, and $B(D_4^{\ell\ell})$.
\item[(3)] $B(G)$ is a minor of a free swirl or is a minor of the bicircular matroid of some graph from Figure \ref{F:SporadicFamilies}.
\end{itemize}
\end{theorem}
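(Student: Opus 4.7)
The plan is to establish the cyclic chain of implications $(3)\Rightarrow(1)\Rightarrow(2)\Rightarrow(3)$. For $(3)\Rightarrow(1)$, the introduction records that each of $B(K_4^{++})$, $B(N_8)$, $B(O_8)$, $B(F_{10})$ is self-dual, that $B(Z_8)\dual\cong B(Z_8\dual)$, and that every free swirl is self-dual; thus each of these matroids is doubly bicircular. Since the class of bicircular matroids is minor-closed (Section 2), its dual class is as well, so every minor of a doubly bicircular matroid is again doubly bicircular. For $(1)\Rightarrow(2)$, I would argue the contrapositive: an ordinary graph minor $H$ of $G$ yields a matroid minor $B(H)$ of $B(G)$, and every matroid listed in (2) is proved minimally not cobicircular in Section 3, so such a minor would force $B(G)$ not to be cobicircular.

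The substance of the proof is $(2)\Rightarrow(3)$. By Proposition \ref{P:Wagner3Biconnectivity}, $G$ is 2-connected, has no free edges, no degree-2 vertex, and no two loops at a common vertex. My plan is to analyze the canonical tree decomposition $T$ of the loopless simple graph underlying $G$ and then decide separately where the loops and parallel edges of $G$ may sit. If $G$ is 3-connected and loopless, Proposition \ref{P:3Connected} immediately yields that $G$ is a minor of $K_4^{++}$, and we are done; so I may assume otherwise, meaning either $T$ has at least two nodes, $G$ has at least one loop, or $G$ has parallel edges. The excluded minors of (2) then impose severe structural constraints on $T$ and on loop placement. The matroids $B(W_4)$, $M(K_{2,3})$, and $B(T_{2,2,3})$ forbid any 3-connected simple node of $T$ from growing beyond $K_4$, via Proposition \ref{P:3Connected} applied internally to each such node. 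The matroids $B(T_{2,2,3})$ and $B(T_{2,2,2}')$ bound every multilink node to multiplicity at most three and restrict how multilinks attach to short cycles. The matroids $U_{2,7}$, $U_{2,6}'$, and $U_{2,5}''$ bound how loops can cluster near any small subconfiguration, while $B(K_4^\ell)$ and $B(D_4^{\ell\ell})$ rule out loops in certain positions relative to the decomposition. Combining these constraints, $T$ is forced to be either a single cycle with a multilink attached at each edge --- yielding $G$ as a minor of $2C_n$ and thus $B(G)$ as a minor of the free swirl --- or to have uniformly bounded size.

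The main obstacle is the detailed case analysis in the bounded-size regime. Once the 3-connected simple pieces are pinned down as $K_4$ or smaller and the multilink multiplicities are capped, I would need to enumerate the remaining tree shapes together with all legal loop placements and verify directly that the surviving possibilities are precisely the minors of $B(K_4^{++})$, $B(N_8)$, $B(O_8)$, $B(F_{10})$, or $B(Z_8)$. Distinguishing which sporadic graph each legitimate configuration falls under --- and, conversely, checking that every one of the excluded-minor conditions is actually invoked somewhere to eliminate an otherwise plausible intermediate case --- is the technical heart of the argument.
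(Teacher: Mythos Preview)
Your overall architecture matches the paper's exactly: the same cycle of implications, and for $(2)\Rightarrow(3)$ the same reduction via Proposition~\ref{P:3Connected} to the non-3-connected case, followed by a canonical tree decomposition and case analysis. Two concrete points in your sketch are off, though, and the second one matters.

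First, the decomposition should be applied to $\hat G$, the graph obtained from $G$ by deleting loops but \emph{retaining parallel edges}, not to the underlying simple graph. The multilink terms of $T$ then encode the parallel classes directly, and the paper organises the entire case analysis around the degree in $T$ of the longest cycle term $C_s$ together with the sizes $m_i$ of its adjacent multilink terms. Stripping parallel edges first and reinserting them afterward is not wrong in principle, but it hides the structure that actually drives the argument. (You also need the base case $|V(G)|=2$ handled separately before Proposition~\ref{P:Wagner3Biconnectivity} applies.)

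Second, and more seriously, your assertion that $B(T_{2,2,3})$ and $B(T_{2,2,2}')$ bound every multilink to multiplicity at most three is false. That bound holds only when $C_s$ has degree at least three in $T$ (the paper's Case~2.1). When $C_s$ has degree two, the operative constraint is the absence of a $7K_2$-minor, coming from $U_{2,7}$, which yields only $m_1+m_2\le 8$; the pairs $(m_1,m_2)=(4,4)$ and $(3,5)$ genuinely occur and are precisely what produce the sporadic graphs $Z_8$, $Z_8\dual$, $O_8$, and $N_8$. If you impose $m_i\le 3$ globally you will never find these, so your bounded-size enumeration would terminate with an incomplete list. The interplay among $U_{2,7}$, $U_{2,6}'$, $U_{2,5}''$ (bounding total multiplicity plus loops) and $T_{2,2,3}$, $M(K_{2,3})$, $D_4^{\ell\ell}$ (controlling how further cycle terms can attach) is more delicate than your summary suggests, and sorting it out case by case is where essentially all of the work lies.
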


\begin{lemma} \label{L:Maximality}
If $G\bs e$ is one of the graphs in Figure \ref{F:SporadicFamilies} and $B(G)$ is 3-connected, then $G$ contains one of the following graphs as a minor: $T_{2,2,3}$, $7K_2$, $6K_2^\ell$, and $5K_2^\circ$.
\end{lemma}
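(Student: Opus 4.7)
The plan is a detailed case analysis on which sporadic graph $H = G\bs e$ is and on the position of the new edge $e$ in $G$. Each graph in Figure~\ref{F:SporadicFamilies} has at most five vertices and substantial symmetry, so the orbits of edge-positions under $\aut(H)$ are few.

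First, by Proposition~\ref{P:Wagner3Biconnectivity} we may assume that $e$ is not a free edge and, if $e$ is a loop, that its vertex is not already looped in $H$; otherwise $B(G)$ fails to be 3-connected. Thus $e$ falls into one of three families: (i) a link parallel to an existing link of $H$; (ii) a link between two vertices of $H$ that were non-adjacent; (iii) a loop at a loopless vertex of $H$.

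For each sporadic $H$ and each orbit representative $e$, I would exhibit an explicit sequence of graph-deletions and graph-contractions turning $G = H\cup\{e\}$ into one of $T_{2,2,3}$, $7K_2$, $6K_2^\ell$, or $5K_2^\circ$. The typical patterns I expect are: in family~(i), contracting a spanning path of $H$ that avoids the endpoints of $e$ collapses $G$ onto a two-vertex multigraph whose surviving edges yield $7K_2$, or whose surviving loop (inherited from $H$) together with parallel links yields $6K_2^\ell$; in family~(ii), the new longer link together with two short paths of $H$ through high-degree vertices realizes a $T_{2,2,3}$-subdivision; in family~(iii), contracting edges incident to the new loop combines it with loops already present in $H$ (or with a multi-edge subgraph contracted into a loop) to produce $5K_2^\circ$ or $6K_2^\ell$.

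The main obstacle is the bookkeeping: although each individual case reduces to a short explicit contraction, the total number of cases grows with $|\aut(H)|$ and with how many non-adjacent vertex pairs remain in each $H$ (e.g.\ $F_{10}$ has many edges, so family~(i) dominates, whereas for the smaller $K_4^{++}$ the families~(ii) and~(iii) must be handled carefully). Organizing the argument by the three families above, and exploiting $\aut(H)$ to transport a minor-witness from one orbit representative to all its images, should keep the enumeration manageable.
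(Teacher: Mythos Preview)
Your plan is correct and is essentially the paper's own approach: a direct case analysis over the sporadic graphs $H=G\bs e$ and the placement of $e$, using Proposition~\ref{P:Wagner3Biconnectivity} to restrict $e$ to a link or to a loop at a loopless vertex. The paper's split is slightly coarser than yours---it only distinguishes ``$e$ a loop'' versus ``$e$ a link'' for each $H$ and then names which minor appears (e.g.\ $6K_2^\ell$ for any loop added to $K_4^{++},N_8,O_8,Z_8,Z_8^*$; $5K_2^\circ$ for a loop added to $F_{10}$; and $7K_2$, $6K_2^\ell$, or $T_{2,2,3}$ for links)---so your three-family refinement is compatible but not needed, and the full bookkeeping you anticipate is lighter than you expect.
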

\begin{proof}
Since $B(G)$ is 3-connected, then $G$ is obtained from $G\bs e$ by adding $e$ as a link with both endpoints on the vertices of $G\bs e$ or as a loop that is not on the same vertex as an existing loop. If $G\bs e=K_4^{++}$ and $e$ is a loop, then $G$ has a $6K_2^\ell$-minor. If $G\bs e=K_4^{++}$ and $e$ is a link, then $G$ has a $7K_2$-minor. If $G\bs e\in\{N_8,O_8,Z_8,Z\dual_8\}$ and $e$ is a loop, then $G$ contains a $6K_2^\ell$-minor. If $G\bs e\in\{N_8,O_8,Z_8,Z\dual_8\}$ and $e$ is a link, then $G$ either has a $T_{2,2,3}$- or $7K_2$-minor. If $G\bs e=F_{10}$ and $e$ is a loop, then $G$ contains a $5K_2^\circ$-minor. If $G\bs e=F_{10}$ and $e$ is a link, then $G$ either has a $T_{2,2,3}$- or $6K_2^\ell$-minor.
\end{proof}

\begin{proof}[Proof of Theorem \ref{T:StrongerThenMain}]
In the introduction we addressed the fact that the free swirls and the bicircular matroids of the graphs in Figure \ref{F:SporadicFamilies} are actually doubly bicircular. This proves $3\rightarrow1$. In Section \ref{S:ExcludedMinors} we showed that the nine bicircular matroids listed in (2) are not cobicircular. This proves $1\rightarrow2$. We now finish by proving $2\rightarrow3$.

Because $B(G)$ is 3-connected, $G$ has at least two vertices. If $G$ has exactly two vertices, then to avoid a $U_{2,7}$-minor in $B(G)$, $G$ must be a subgraph of $6K_2$, $5K_2^\ell$, or $4K_2^\circ$. These graphs are, respectively, minors of $O_8$, $F_{10}$, and $K_4^{++}$, a desired outcome. For the remainder of the proof, we now assume that $G$ has at least three vertices. Because $B(G)$ is 3-connected and $G$ has at least three vertices, we now get that $G$ is 2-connected, has no vertices of degree 2, has no free edges, and has no two loops incident to the same vertex. If $G$ is 3-connected, then our result follows from Proposition \ref{P:3Connected}. So for the remainder of the proof we may assume that $G$ is 2-connected but not 3-connected.

Let $\hat G$ be the graph obtained from $G$ by removing all of its loops. Let $T$ be the canonical tree decomposition of $\hat G$. In Case 1 say that $T$ has a 3-connected term, call it $K$, and in Case 2 that every term of $T$ is a cycle or multi-edge.

\noindent{\bf Case 1} By Proposition \ref{P:3Connected}, $K\cong K_4$. If $T$ has another 3-connected term $K'\cong K_4$, then $G$ has a $K_4\oplus_2 K_4$-minor which has a $T_{2,2,3}$-minor, a contradiction. Thus $K$ is the only 3-connected term of $T$. Since $G$ is not 3-connected, there must be a cycle term $C$ adjacent to $K$ in $T$; furthermore, since $G$ has minimum degree 3, there is either a muti-edge term $M$ adjacent to $C$ in $T$ or there is a loop in $G$ incident to one of the internal vertices of $C$. Either possibility, however, creates a $K_4^\ell$-minor in $G$, a contradiction.

\noindent{\bf Case 2} Because $G$ has at least three vertices, $T$ must have a cycle term, $C_s$ which will denote a cycle of length $s\geq 3$. Choose $C_s$ to be the longest such cycle label in $T$ and make it the root of $T$. In Case 2.1 say that the vertex in $T$ corresponding to $C_s$ has degree at least three. In Case 2.2, say that the vertex in $T$ corresponding to $C_s$ has degree two. In Case 2.3, say that the vertex in $T$ corresponding to $C_s$ has degree zero or 1.

\noindent{\bf Case 2.1} If $C_s$ is adjacent to three or more multi-edge terms, then each such multi-edge is $3K_2$ because otherwise we would produce a $T_{3,2,2}$-minor in $G$, a contradiction. Thus $\hat G$ is a minor of $2C_{n}$ which makes $G$ a minor of $2C_{2n}$ (a desired result) unless there is a second cycle term $C_t$ in $T$ which is adjacent to one of the multi-edge terms $M$ in $T$ adjacent to $C_s$. This however, would create a $T_{2,2,2}'$-minor in $\hat G$, a contradiction.

\noindent{\bf Case 2.2} Say that $M_1\cong m_1K_2$ and $M_2\cong m_2K_2$ are the two multiedge terms whose corresponding vertices in $T$ are adjacent to $C_s$. The two edges of $C_s$ into which $M_1$ and $M_2$ are summed are either adjacent or non-adjacent edges. Let these be Cases 2.2.1 and Cases 2.2.2. In both cases $m_1,m_2\geq3$ and $m_1+m_2\leq 8$ because $\hat G$ has no $7K_2$-minor.

\noindent{\bf Case 2.2.1} In Case 2.2.1.1 say that $s=3$, in Case 2.2.1.2 say that $s=4$, and in Case 2.2.1.3 say that $s\geq 5$.

\noindent{\bf Case 2.2.1.1} Since $m_1,m_2\geq3$ and $m_1+m_2\leq 8$, we have that $(m_1,m_2)$ is one of $(4,4)$, $(3,5)$, $(3,4)$, $(3,3)$. Let these be, respectively, Cases 2.2.1.1.1--2.2.1.1.4.

\noindent{\bf Case 2.2.1.1.1} Here $(m_1,m_2)=(4,4)$. First, we claim that $T$ consists of $C_s$, $M_1$, and $M_2$, only. If by way of contradiction, there is another term in $T$, then without loss of generality it must be a cycle term, call it $C$, adjacent to $M_1$. If the vertex corresponding to $C$ in $T$ is a leaf of $T$, then $G$ must have a loop incident to an internal vertex of $C$ and so $G$ has a $6K_2^\ell$-minor, a contradiction. If $C$ is not a leaf of $T$, then it is adjacent to another multi-edge term, call it $M$. This, however, would create a $7K_2$-minor in $\hat G$, a contradiction. Thus $T$ consists of $C_s$, $M_1$, and $M_2$, only and so $\hat G=T_{3,3,1}$; furthermore, if we add a loop anywhere to $\hat G$, then $B(G)$ would have a $6K_2^\ell$-minor, a contradiction. Thus $G=T_{3,3,1}$ which is a minor of $Z_8\dual$, a desired result.

\noindent{\bf Case 2.2.1.1.2} Here $(m_1,m_2)=(3,5)$. In a similar fashion as in Case 2.2.1.1.1, we get that $T$ consists of $C_s$, $M_1$, and $M_2$, only, because otherwise we would be able to construct a $6K_2^\ell$- or $7K_2$-minor in $G$, a contradiction. Thus $\hat G=T_{1,2,4}$ and we cannot add a loop without creating a $6K_2^\ell$-minor. Thus $G=T_{1,2,4}$ and is a minor of $Z_8$, a desired result.

\noindent{\bf Case 2.2.1.1.3} Here $(m_1,m_2)=(3,4)$. If $V(T)=\{C_s,M_1,M_2\}$, then $\hat G=T_{3,2,1}$. Since $G$ has no $5K_2^\circ$-minor, $G$ is therefore a subgraph of $T_{3,2,1}$ along with either a loop at each end of the undoubled edge or a loop at the vertex of degree 5.  Both graphs are minors of $F_{10}$, a desired result.

If $|V(T)|\geq4$, then $T$ has a 3-cycle term $C$ adjacent to either $M_1$ or $M_2$. If $C$ is adjacent to $M_2$, then because $G$ has minimum degree 3, $G$ has as a minor one of graphs 1, 2, 3 of Figure \ref{F:Case22113}. The first two graphs have minors that are bicircular representations of $M(K_{2,3})$, a contradiction. The third graph is $Z_8$. By Lemma \ref{L:Maximality}, $Z_8$ is maximal among 2-connected 4-vertex graphs which do not contain a minor from among $T_{2,2,3}$, $7K_2$, $6K_2^\ell$, and $5K_2^\circ$. Hence if $G$ has four vertices, then $G=Z_8$. If $G$ has five or more vertices, then the tree decomposition $T$ of $\hat G$ must have another 3-cycle term. Again, recall that $G$ has minimum degree 3. Hence, adding on the new 3-cycle term to the tree will replace one edge with at least three edges. This will yield a minor $H$ in $G$ with four vertices which properly contains $Z_8$, a contradiction by Lemma \ref{L:Maximality}.

\begin{figure}[H]
\begin{center}
\includegraphics[page=15,scale=0.5]{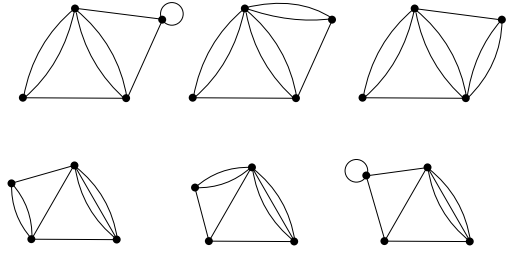}
\end{center}
\caption{Graphs for the proof of Case 2.2.1.1.3} \label{F:Case22113}
\end{figure}

Finally, if $C$ is adjacent to $M_1$, then $G$ contains as a minor one of graphs 4, 5, 6 from Figure \ref{F:Case22113}.  The fifth and sixth graphs of Figure \ref{F:Case22113} both contain minors representing $U_{2,5}''$, a contradiction. The fourth graph is $N_8$. If $G$ has four vertices, then by Lemma \ref{L:Maximality}, $G=N_8$. If $G$ has at least five vertices, then the tree decomposition $T$ of $\hat G$ must have another 3-cycle term. As before, adding in this new 3-cycle term will replace one edge of $G$ with at least three edges and so yields a minor $H$ in $G$ with four vertices which properly contains $N_8$, a contradiction by Lemma \ref{L:Maximality}.

\noindent{\bf Case 2.2.1.1.4} Here $(m_1,m_2)=(3,3)$. If $V(T)=\{C_s,M_1,M_2\}$, then $G$ is a minor of $2C_6$, a desired result. If $T$ has more than three vertices, then vertices $M_1$ and $M_2$ together can have at most two children. If they had three or more, then $G$ would contain as a minor the first graph of Figure \ref{F:Case22114}. Contracting the two unsubdivided edges of this graph yields a minor representing $M(K_{2,3})$, a contradiction.

If we assume that $M_1$ and $M_2$ together have two children, then $G$ contains as a subgraph the graph, call it $K$, obtained from $T_{2,2,1}$ by subdividing two of the four doubled edges. Since $G$ has minimum degree 3, $G$ has a minor $K_0$ which is obtained from $K$ by attaching a link or loop to $K$ to each degree-2 vertex in $K$ with the second endpoint of a link adjacent in $K$ to the first. There are 11 such graphs, all of which have a minor representing $M(K_{2,3})$, a contradiction.

Lastly, assume that $M_1$ and $M_2$ together have just one child. This must be a $3$-cycle term, call it $C$, and say without loss of generality that $C$ is adjacent to $M_1$ in $T$. If $V(T)=\{C_s,M_1,M_2,C\}$, then $G$ has four vertices and must contain as a subgraph the middle graph of Figure \ref{F:Case22114}. Furthermore, $G$ must be obtainable from the middle graph of Figure \ref{F:Case22114} by adding loops. A loop added to vertex 1 would yield a graph with minor representing $M(K_{2,3})$, a contradiction. A loop added to vertex 2 would yield a graph with a $D_4^{\ell\ell}$-minor, a contradiction. Thus $G$ is contained between the two graphs shown in Figure \ref{F:Case22114}, the second graph is a minor of $F_{10}$, a desired outcome.
\begin{figure}[H]
\begin{center}
\includegraphics[page=13,scale=0.8]{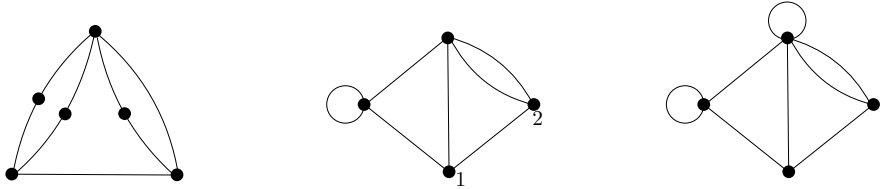}
\end{center}
\caption{Graphs for the proof of Case 2.2.1.1.4} \label{F:Case22114}
\end{figure}
\noindent If $T$ has exactly five vertices, say $V(T)=\{C_s,M_1,M_2,C,M\}$, then the fifth vertex $M$ must be a multi-edge term adjacent to $C$. If $M=tK_2$ for $t\geq 4$, then $\hat G$ contains as a spanning subgraph one of the graphs in the first row of Figure \ref{F:Case22114b}
\begin{figure}[H]
\begin{center}
\includegraphics[page=18,scale=0.6]{Figures.pdf}
\end{center}
\caption{More graphs for the proof of Case 2.2.1.1.4} \label{F:Case22114b}
\end{figure} \noindent The second graph contains a minor representing $U_{2,5}''$, a contradiction. The first graph is $N_8$ and because $G$ has four vertices we get that $G=N_8$ by Lemma \ref{L:Maximality}, a desired result. Now we may assume that $M=3K_2$. In this case, $\hat G$ is one of the two graphs in the second row of Figure \ref{F:Case22114b}. We then obtain $G$ from $\hat G$ by adding loops. For the first graph, in order to avoid creating a $U_{2,7}$-minor, we can add at most two loops as shown in the third row of Figure \ref{F:Case22114b}. For the second graph, in order to avoid creating an $M(K_{2,3})$- or $U_{2,5}''$-minor, we can add at most one loop as shown in the third row of the figure. Both of the graphs in the third row are minors of $F_{10}$,  a desired outcome. If $T$ has a sixth vertex, call it $X$, then $X$ is a multi-edge attached to $C$ or is a 3-cycle term attached to $M$. In the former case, $G$ would contain a $T_{2,2,3}$-minor, a contradiction, and so $X$ is a 3-cycle term attached to $M$. In the latter case, $G$ contains one of the graph of Figure \ref{F:Case2211d} as a minor; however, all six of these graphs have minor representing $B(K_{2,3})$, a contradiction.
\begin{figure}[H]
\begin{center}
\includegraphics[page=19,scale=0.6]{Figures.pdf}
\end{center}
\caption{More graphs for the proof of Case 2.2.1.1.4} \label{F:Case2211d}
\end{figure}

\noindent{\bf Case 2.2.1.2} Since $M_1$ and $M_2$ are summed into adjacent edges of $C_s$, the vertex of $C_s$ not used by $M_1$ and $M_2$, call it $v$, as a vertex of $G$ and must have a loop incident to it. Again, $m_1,m_2\geq 3$. If $m_1$ or $m_2=4$, then $G$ contains the graph on the left of Figure \ref{F:Case2212} as a minor and this graph has a subgraph representing $U_{2,5}''$, a contradiction.
\begin{figure}[H]
\begin{center}
\includegraphics[page=20,scale=0.4]{Figures.pdf}
\end{center}
\caption{Graphs for the proof of Case 2.2.1.2.} \label{F:Case2212}
\end{figure} \noindent Thus $m_1=m_2=3$. If $V(T)=\{C_s,M_1,M_2\}$, then $G$ is obtained from the graph on the right of Figure \ref{F:Case2212} by adding loops. Thus $G$ is a minor of $2C_8$, a desired result. If $T$ has a fourth vertex, then it must be a cycle term, call it $C$, which is, without loss of generality, adjacent to $M_1$. Thus $G$ contains one of the graphs of Figure \ref{F:Case2212} as a minor. Each of these graphs. however, contains a minor repreenting $M(K_{2,3})$, a contradiction. \begin{figure}[H]
\begin{center}
\includegraphics[page=2,scale=0.4]{Figures.pdf}
\end{center}
\caption{Graphs for the proof of Case 2.2.1.2} \label{F:Case2212}
\end{figure} \noindent

\noindent{\bf Case 2.2.1.3} In a very similar fashion as in Case 2.2.1.2 we get that $G$ is a minor of $2C_n$.

\noindent{\bf Case 2.2.2} We cannot have that $s=3$. If $s\geq 5$, then in a very similar fashion as in Case 2.2.1.2, we get that $G$ is a minor of $2C_n$, a desired result. So it remains to consider the case in which $s=4$. Again, we have that $(m_1,m_2)$ is one of $(4,4)$, $(3,5)$, $(3,4)$, and $(3,3)$. Let these be, respectively Cases 2.2.2.1 -- 2.2.2.4.

\noindent{\bf Case 2.2.2.1} Here $(m_1,m_2)=(4,4)$. Thus $G$ contains $Z_8\dual$ as minor. If $G$ contains $Z_8\dual$ as a subgraph, then again, $G=Z_8\dual$ by Lemma \ref{L:Maximality}. If $G$ does not contain a $Z_8\dual$-subgraph, then the tree decomposition of $\hat G$ has vertex set containing $\{C_s,M_2,M_2,C\}$ in which $C$ is a cycle term summed onto $M_1$. Since $G$ has no vertices of degree 2, then $G$ contains a minor obtained from $Z_8\dual$ by adding an edge with both endpoints in $Z_8\dual$. Again, Lemma \ref{L:Maximality} implies that $G=\bb Z_8\dual$.

\noindent{\bf Case 2.2.2.2} Say that $(m_1,m_2)=(3,5)$. In a similar fashion as with $(m_1,m_2)=(4,4)$ we get that $G\cong O_8$, a desired result.

\noindent{\bf Case 2.2.2.3} Say that $(m_1,m_2)=(3,4)$. If $V(T)=\{C_s,M_1,M_2\}$, then $\hat G$ is the first graph shown in Figure \ref{F:Case2223}, call if $O$. If we add a loop to one of the top two vertices of $O$ and to one of the bottom two vertices of $O$, then the resulting graph has a minor representing $U_{2,7}$, a contradiction. If we add loops to both of the top vertices, then we obtained the second graph of Figure \ref{F:Case2223} which is a minor of $F_{10}$, a desired result. If $T$ has a fourth vertex, then it is a cycle term attached to either $M_1$ or $M_2$. Therefore $G$ will contain as a minor one of the last four graphs of Figure \ref{F:Case2223}. The third and fourth graphs both contain a bicircular representation of $U_{2,5}''$ as a minor, a contradiction. The fifth and sixth graphs both contain a bicircular representation of $M(K_{2,3})$ as a minor, again a contradiction.

\begin{figure}[H]
\begin{center}
\includegraphics[page=21,scale=0.6]{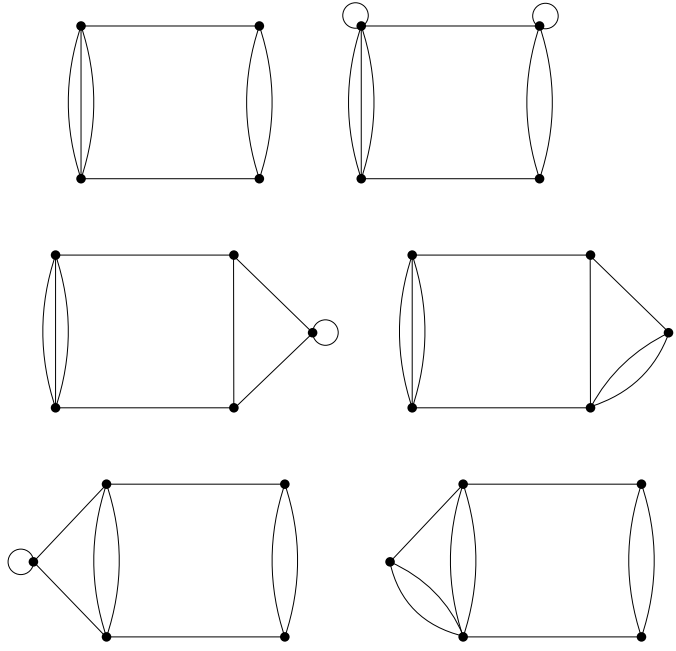}
\end{center}
\caption{Graphs for the proof of Case 2.2.2.3} \label{F:Case2223}
\end{figure}

\noindent{\bf Case 2.2.2.4} Say that $(m_1,m_2)=(3,3)$. If $V(T)=\{C_s,M_1,M_2\}$, then $\hat G$ is $C_4^{++}$ and so $G$ is a minor of $2C_8$, a desired result. If $V(T)$ has a fourth vertex, then it is a cycle term, call it $C$, and say without loss of generality that $C$ is adjacent to $M_1$ in $T$. If $C$ is a leaf of $T$, then $G$ contains as a minor the first graph of Figure \ref{F:Case2224}. This graph, however, contains a $D_4^{\ell\ell}$-minor, a contradiction. Thus $V(T)$ must contain a fifth vertex, call it $X$, that is adjacent to $C$, which makes $X$ a multi-edge term. Suppose now that $V(T)=\{C_s,M_1,M_2,C,X\}$. If $X=tK_2$ for $t\geq 4$, then $G$ contains as a minor the second graph of Figure \ref{F:Case2224} which contains a bicircular representation of $U_{2,5}''$ as a minor, a contradiction. Thus $X=3K_2$ and so $\hat G$ is the third graph of Figure \ref{F:Case2224}, call it $F$. If we add a loop to $F$ at vertex 1, then the resulting graph has a $D_4^{\ell\ell}$-minor, a contradiction. If we add a loop to $F$ at vertex 2, then the resulting graph contains a bicircular representation for $M(K_{2,3})$ as a minor, a contradiction. If we add a loop to vertex $3$, then the resulting graph contains a bicircular representation of $U_{2,5}''$ as a minor, again a contradiction. Thus $G$ is contained between the third graph of the figure and the graph obtained by adding loops to both of the unnumbered vertices. This latter graph is $F_{10}$, a desired outcome.\begin{figure}[H]
\begin{center}
\includegraphics[page=22,scale=0.6]{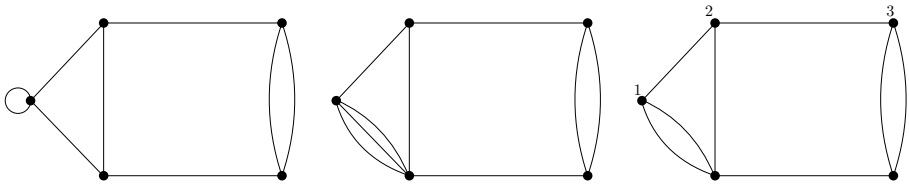}
\end{center}
\caption{More graphs for the proof of Case 2.2.2.4} \label{F:Case2224}
\end{figure} So now suppose that $V(T)$ properly contains $\{C_s,M_1,M_2,C,X\}$ and so has a sixth vertex, call it $Y$. The vertex $Y$ is either a multi-edge term adjacent to $C$ or a cycle term adjacent to one of $M_1$, $M_2$, and $X$. The reader can verify all of the outcomes in these four cases. One, if $Y$ is adjacent to $C$, then $G$ contains a $T_{2,2,3}$-minor, a contradiction. Two, if $Y$ is adjacent to $X$, then $G$ contains a representation of $M(K_{2,3})$, a contradiction. Three, if $Y$ is adjacent to $M_2$, then $G$ either contains a representation of $M(K_{2,3})$ or $U_{2,5}''$, a contradiction. Four, if $Y$ is adjacent to $M_1$, then $G$ contains a representation of $M(K_{2,3})$ as a minor, a contradiction.

\noindent{\bf Case 2.3} If $C_s$ has degree zero in $T$, then $\hat G=C_s$ and so $G$ is obtained from $C_s$ by adding a loop to each vertex. Thus $G$ is a minor of $2C_{2s}$, a desired outcome. If $C_s$ has degree 1 in $T$, then we split the remainder of this case into two subcases. In Case 2.3.1, say that $s\geq 4$ and in Case 2.3.2 say $s=3$. In each case, let $M$ be the multiedge term whose corresponding vertex in $T$ is adjacent to $C_s$.

\begin{figure}[H]
\begin{center}
\includegraphics[page=23,scale=0.6]{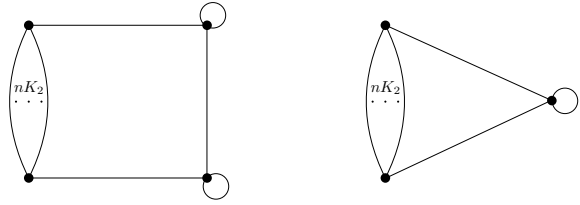}
\end{center}
\caption{Graphs for the proof of Case 2.3.} \label{F:Case23}
\end{figure}

\noindent{\bf Case 2.3.1} In this case $\hat G$ contains as a minor the first graph shown in Figure \ref{F:Case23}. If $n\geq 3$, then the graph shown contains a bicircular representation of $U_{2,5}''$-minor, a contradiction. So now suppose that $n=2$. If $V(T)=\{C_s,M\}$, then $G$ is a minor of $2C_{2s}$, a desired outcome. If $T$ contains a third vertex, then this is a cycle term, call it $C$, which is adjacent to $M$. In this case $G$ contains as a minor one of the two graphs shown in Figure \ref{F:Case231}. Both of these graphs, however, contain a bicircular representation of $M(K_{2,3}$), a contradiction.

\begin{figure}[H]
\begin{center}
\includegraphics[page=3,scale=0.6]{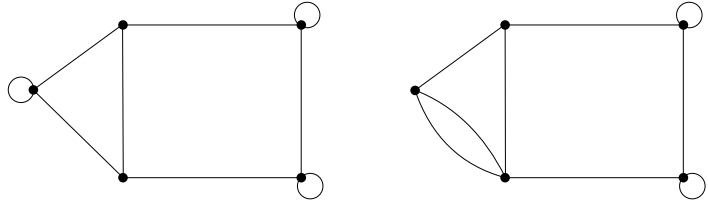}
\end{center}
\caption{Graphs for the proof of Case 2.3.1.} \label{F:Case231}
\end{figure}

\noindent{\bf Case 2.3.2} In this case, $\hat G$ is obtained from the second graph of Figure \ref{F:Case23} by removing its loop. If $n=2$, then $G$ is a minor of $2C_6$, a desired outcome. If $n=3$, then the second graph of the figure with one additional loop added is a minor of $F_{10}$, a desired outcome. If a third loop is added, then the graph obtained contains a representation of $U_{2,6}'$ as a minor, a contradiction. If $n=4$, then the second graph of the Figure \ref{F:Case23} is a minor of $O_8$, a desired outcome. If one loop is added, then the graph contains a bicircular representation of $U_{2,7}$ as a minor, a contradiction.
\end{proof}

\bibliographystyle{amsplain}
\bibliography{Cobicircular_bibfile}

\end{document}